\DeclareMathOperator{\disc}{disc}
\DeclareMathOperator{\Gal}{Gal}
\DeclareMathOperator{\Aut}{Aut}
\DeclareMathOperator{\End}{End}
\DeclareMathOperator{\trd}{trd}
\DeclareMathOperator{\Mat}{Mat}
\DeclareMathOperator{\Jac}{Jac}
\DeclareMathOperator{\Prym}{Prym}
\DeclareMathOperator{\M}{M}
\DeclareMathOperator{\GL}{GL}
\DeclareMathOperator{\SL}{SL}
\DeclareMathOperator{\PSL}{PSL}
\DeclareMathOperator{\GSp}{GSp}
\DeclareMathOperator{\SU}{SU}
\DeclareMathOperator{\KS}{KS}
\DeclareMathOperator{\id}{id}
\DeclareMathOperator{\Cor}{Cor}
\DeclareMathOperator{\Res}{Res}
\DeclareMathOperator{\coker}{coker}
\DeclareMathOperator{\Sh}{Sh}
\DeclareMathOperator{\MT}{MT}
\DeclareMathOperator{\CC}{\mathbb{C}}
\DeclareMathOperator{\C}{\mathbb{C}}
\DeclareMathOperator{\QQ}{\mathbb{Q}}
\DeclareMathOperator{\Q}{\mathbb{Q}}
\DeclareMathOperator{\R}{\mathbb{R}}
\DeclareMathOperator{\ZZ}{\mathbb{Z}}
\DeclareMathOperator{\Z}{\mathbb{Z}}
\DeclareMathOperator{\Ac}{\mathcal{A}}
\DeclareMathOperator{\Mod}{\mathcal{M}}
\newcommand{\calO}{\mathcal{O}}
\newcommand{\calX}{\mathcal{X}}
\newcommand{\PP}{\mathbb{P}}
\newcommand{\Magma}{\textsf{Magma}}
\renewcommand{\H}{\mathcal{H}}
\newcommand{\wt}{\widetilde}
\theoremstyle{definition}
\newtheorem{definition}[algocf]{Definition}
\newtheorem{rem}[algocf]{Remark}
\theoremstyle{plain}
\newtheorem{thm}[algocf]{Theorem}
\newtheorem*{thm*}{Theorem}
\newtheorem{cor}[algocf]{Corollary}
\newtheorem{lem}[algocf]{Lemma}
\newtheorem{prop}[algocf]{Proposition}
\newtheorem{conj}[algocf]{Conjecture}
\numberwithin{equation}{section}
\title[Mumford-type Shimura curves]{Mumford-type Shimura curves contained in the Torelli locus}
\author[Bouchet, Hanselman, Pieper, Schiavone]{Thomas Bouchet, Jeroen Hanselman, Andreas Pieper,\\ and Sam Schiavone}
\address{
  Thomas Bouchet,
  Universit\'e C\^ote d'Azur,
}
\email{thomas.bouchet@univ-cotedazur.fr}
\address{
  Jeroen Hanselman,
  RPTU Kaiserslautern-Landau
}
\email{hanselman@mathematik.uni-kl.de}
\address{
  Andreas Pieper,
  Universit\"at Duisburg-Essen
}
\email{andreas.pieper@uni-due.de}
\address{
  Sam Schiavone,
}
\email{sam.schiavone@gmail.com}
\date{\today}
\begin{document}

\begin{abstract}
    In this article we give explicit equations for two Shimura families of genus $4$ curves whose Jacobians are abelian fourfolds of Mumford type. These are the first explicit examples of abelian varieties over $\Q$  with geometric endomorphism algebra $\mathbb{Z}$ and additional Hodge tensors.
\end{abstract}

\maketitle
\tableofcontents
\newpage
\section{Introduction}
\subsection{Main results}

The main goal of this article is to present two explicit families of abelian varieties of the type constructed by Mumford in \cite{MumfordShimura}. In particular, we give explicit equations for two families of genus $4$ curves parametrized by Shimura curves, such that the Jacobians of these curves are of Mumford type. This realizes the longstanding goal of providing explicit examples of Mumford's construction, as well as answering several questions of Moonen and Oort \cite{MoonenOort}.

Let $\Ac_g$ be the coarse moduli space of principally polarized complex abelian varieties of dimension $g$, $\Mod_g$ the moduli space of curves and $j:\Mod_g \to \Ac_g$ the Torelli map. The image $j(\Mod_g) = T_g^o$ is called the open Torelli locus and its Zariski closure $T_g$ is called the Torelli locus. A subvariety $S$ of $\Ac_g$ is called a special subvariety if $S$ is an irreducible component of a Shimura variety.

Coleman conjectured \cite{Coleman}, as an analogue to the Manin-Mumford conjecture (which has been proven by Raynaud \cite{Raynaud}), that for fixed $g\geqslant 4$ there are only finitely many CM points in the open Torelli locus. The Andr\'e-Oort conjecture, which has been proven conditionally on GRH by work of Klingler, Yafaev and Ullmo \cite{AndreOort1} \cite{AndreOort2} and unconditionally by Tsimerman \cite{Tsimerman}, says that if $S \subset\Ac_g$ is an irreducible subvariety, then $S$ is special if and only if the CM points in $S$ are Zariski dense. Combining this with Coleman's conjecture we get the following.

\begin{conj}[Coleman--Oort] For large enough $g$ there are no special subvarieties $S \subset \Ac_g$ of positive dimension that are contained in the Torelli locus $T_g$ and that meet $T_g^o$.
\end{conj}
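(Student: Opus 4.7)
Since the statement is formally a conjecture, the ``proof proposal'' here is really the deduction from the two assertions immediately above it: Coleman's conjecture and the Andr\'e--Oort theorem (Tsimerman). My plan is to argue by contradiction, extract infinitely many CM points from a putative positive-dimensional special subvariety $S \subset T_g$ meeting $T_g^o$, and land in the hypothesis of Coleman's conjecture.

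First I would take a hypothetical positive-dimensional special subvariety $S \subset \Ac_g$ with $S \subset T_g$ and $S \cap T_g^o \neq \emptyset$. By definition $S$ is an irreducible component of a Shimura subvariety of $\Ac_g$, so Andr\'e--Oort (Tsimerman) immediately yields that the CM points of $S$ are Zariski-dense in $S$.

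Next I would use that $T_g^o$ is open in $T_g$ (being the image of the Torelli morphism before taking closure), so $U := S \cap T_g^o$ is a nonempty open subvariety of $S$. Since $S$ is irreducible, $U$ is dense in $S$, and the CM points are therefore Zariski-dense in $U$. Because $S$, and hence $U$, has positive dimension, $U$ contains infinitely many CM points, all of which lie in $T_g^o$. For $g$ in the range where Coleman's conjecture applies, this contradicts the asserted finiteness.

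The main obstacle is not in this formal deduction but in Coleman's conjecture itself, which remains open in general. Any genuine progress on Coleman--Oort must therefore either prove Coleman's conjecture directly, or bound the dimension of special subvarieties contained in $T_g$ by more intrinsic methods, e.g.\ Viehweg--Zuo--Moonen-style estimates on the slopes of the Higgs bundles attached to Shimura families inside $\Mod_g$. The present paper's explicit Mumford-type Shimura curves in $T_4$ show that the conjecture has nontrivial content precisely because, for small $g$, such $S$ really do exist.
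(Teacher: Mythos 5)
This statement is a conjecture, and the paper offers no proof beyond the one-line remark that it follows from combining Coleman's conjecture with Andr\'e--Oort; your proposal correctly spells out exactly that deduction (density of CM points on a positive-dimensional special $S$, openness of $T_g^o$ in $T_g$, hence infinitely many CM Jacobians, contradicting Coleman), and rightly notes that the remaining content is Coleman's conjecture itself. So you are taking essentially the same approach as the paper, just made explicit.
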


For $g \leq  7$ it is known that special subvarieties of positive dimension in the open Torelli locus exist. For $g < 4$ this is trivially true as $\dim T_g^o = \dim \Ac_g$. Special subvarieties with $g = 4$ and $g=6$ have been found by de Jong and Noot \cite{DeJong1991}---these examples go back to the work of Shimura \cite{Shimura}---and examples of genus 5 and genus 7 have been found by Rohde \cite{Rohde}. For a summary of results, see \cite{MoonenOort}. All previously known examples of such families arise from Galois covers of curves \cite{FGP, FPP} and are of PEL type. Moonen computed in \cite{Moonen_subvars} a list of all possible families that can arise as cyclic covers of $\mathbb{P}^1$.

A natural question to ask is if there are any other kinds of families. Up until now, no Hodge-type special loci with geometric generic endomorphism algebra $\mathbb{Z}$ were known \cite[Question 6.3]{MoonenOort}. In \cite{MumfordShimura}, Mumford gave a construction of  compact Shimura curves not of PEL type parametrizing a family of abelian varieties whose geometric endomorphism algebra is indeed generically isomorphic to $\Z$. These abelian varieties of Mumford type have been studied from a variety of perspectives. Various researchers have studied their Galois representations \cite{Noot-GaloisRepns}, those with CM \cite{NootMumford}, the representations of their Mumford--Tate groups \cite{Galluzzi-Corestriction}, and the K3 surfaces associated to them via the Kuga--Satake construction \cite{Galluzzi-KugaSatake, Zhu-K3}.

However, in the more than 50 years since the publication of Mumford's article, no explicit examples of abelian varieties of Mumford type have been produced, despite promising strategies towards that goal \cite{Calegari}. 

In \cite[Problem 1]{OpenProblems}, Gross asked if there are abelian fourfolds of Mumford type that are Jacobians of curves. Baldi--Klinger--Ullmo~\cite[Theorem 3.17, Remark 3.18]{BKU} proved that there exist infinitely many such examples over $\overline{\mathbb{Q}}$, while noting that providing equations of such curves remained an open problem.
In this article, we present two explicit families over $\mathbb{Q}$ of genus $4$ curves whose Jacobians are abelian fourfolds of Mumford type. These give us two new special families in $T_4$, thereby giving an explicit answer to Gross's question, as well as several questions of Moonen and Oort \cite[Questions 6.2, 6.3, 6.5]{MoonenOort} (Question 6.3 was proposed earlier by Oort in 1995 published in \cite[Question 7B]{Oort-OpenProblemsArithmetic}).

Let $\H$ be the complex upper half-plane and for $p,q,r \in \Z_{\geq 2}$ let $\Delta(p,q,r)$ denote the triangle group with these parameters. (See \Cref{subsec:triangle} below for a precise definition.)
\begin{thm}[\Cref{prop:Shimura}, \Cref{thm:main}] \label{thm:main_intro}
${}$
\begin{enumerate}
    \item[i)]
Consider the family of hyperelliptic curves
    \begin{equation} \label{eq:237}
       \begin{aligned}
          C_{7,t}: y^2 &= t\left(\left(t - \frac{27}{16}\right) x^{10} - \frac{567}{64} x^{9} - \frac{189}{4} t x^{8} + \left(-84 t^{2} - \frac{189}{4} t\right)\right. x^{7}\\
          &\left.\quad - 189 t^{2} x^{6} - \frac{189}{2} t^{2} x^{5} + 84 t^{3} x^{4} + 108 t^{3} x^{3} - 28 t^{4} x\right)\,.
       \end{aligned}
    \end{equation}
     Then $C_{7,t}$ is a Shimura family of Mumford type. The analytification of the associated Shimura curve $\calX_7$ is $\Delta(2,3,7) \backslash \H$. 
    \item[ii)] Consider the canonically embedded genus $4$ curves in $\PP^3_{X, Y, Z, W}$ given by
    \begin{equation} \label{eq:239}
        \begin{aligned}
        C_{9,t}: 0 &= X Z-Y^{2}\\
        0 &= 3W^3 + t(t - 1)\Big( ( 5X^2 + 6XY + 2tYZ + 3tZ^2 )3W \\
        &+ (-2t + 9)X^3 + 22tX^2Y + 21tX^2Z + (-14t^2 + 18t)XYZ \\
        &+ t^2XZ^2 + 6t^2YZ^2 + (-3t^3 + 6t^2)Z^3 \Big) \, .
        \end{aligned}
    \end{equation}
    Then $C_{9,t}$ is a Shimura family of Mumford type. The analytification of the associated Shimura curve $\calX_9$ is $\Delta(2,3,9) \backslash \H$.
    \end{enumerate}
\end{thm}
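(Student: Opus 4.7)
The plan has two components, mirroring the cited results: first, show each $C_{p,t}$ is a Shimura family of Mumford type (\Cref{prop:Shimura}), and second, identify the analytification of the base with $\Delta(2,3,p)\backslash \H$ (\Cref{thm:main}). Since the arguments for $p=7$ and $p=9$ are parallel, I would treat both cases in a single framework and separate only when computing explicit invariants.

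For the Mumford-type Shimura property, I would analyze the variation of Hodge structure $R^1\pi_\ast \Q$ of the Jacobian family and verify that its generic Mumford--Tate group is a Mumford group, i.e., a $\Q$-form of $\SL_2^3$ obtained by corestriction from a totally real cubic field with exactly one split real place. This decomposes into three sub-steps: (a) computing the endomorphism ring of the generic fiber $\Jac(C_{p,t})$ and showing it equals $\Z$, best achieved by specializing to a prime of good reduction and applying Tate's theorem to the characteristic polynomial of Frobenius; (b) verifying that the Kodaira--Spencer map is nonzero, so the moduli image has dimension exactly one; and (c) exhibiting the extra Hodge tensors demanded by Mumford's construction. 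Step (c) is the crux, which I would tackle via monodromy: compute the topological monodromy of the local system on the base by integration around the singular fibers of the family, show that its Zariski closure is contained in the appropriate Mumford group (whose compact factors are copies of $\SU(2)$), and then invoke Deligne's theorem that Hodge cycles on abelian varieties are absolute, together with the Theorem of the Fixed Part, to transfer the monodromy constraint to the Mumford--Tate group.

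For the triangle group identification, I would exploit Takeuchi's classification of arithmetic triangle groups: both $\Delta(2,3,7)$ and $\Delta(2,3,9)$ are arithmetic, each associated (up to conjugation) with a unique Shimura datum coming from a quaternion algebra over a specific totally real cubic field---$\Q(\cos(\pi/7))$ and $\Q(\cos(\pi/9))$, respectively---split at exactly one real place. Matching $\calX_p$ to this datum reduces to matching orbifold invariants: the three elliptic points of orders $2, 3, p$ on $\Delta(2,3,p)\backslash\H$ should correspond to parameter values of $t$ at which the generic fiber acquires extra automorphisms of the appropriate order or degenerates in a controlled way. Numerical period computations at a few test values of $t$, combined with the identification of a CM fiber whose Jacobian has a known CM type (matching one of Noot's CM points of Mumford-type Shimura curves), would pin down the isomorphism uniquely.

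The main obstacle I anticipate is step (c): sharply upper-bounding the Mumford--Tate group, since monodromy alone only gives a lower bound on its neutral component. To close the gap, I would combine dimension counting---any Hodge locus supporting the variation must have dimension $1$ by step (b), which constrains the admissible Mumford--Tate groups tightly---with ruling out non-trivial isogeny decompositions of the generic Jacobian through an irreducibility computation on the Frobenius polynomial at a carefully chosen prime of good reduction. A parallel subtlety is the rigorous identification of the monodromy matrices with generators of $\Delta(2,3,p)$, which likely requires either high-precision numerical period integration with an a posteriori recognition step, or an algebraic argument via deformation theory at a distinguished fiber.
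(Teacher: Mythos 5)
Your outline is a genuinely different strategy from the paper's, and it stalls exactly where you predict it will. The crux of your plan is step (c): rigorously computing the topological monodromy of the weight-one local system, certifying that its Zariski closure lies inside (and is large enough inside) the Mumford group, and then transferring this to the Mumford--Tate group via the theorem of the fixed part, using a CM fiber to verify that the invariant tensors are Hodge classes somewhere. The logic of that chain is sound in principle, but you offer no method for turning ``high-precision numerical period integration with an a posteriori recognition step'' into a proof: recognizing monodromy matrices numerically does not certify them, and without certified generators you get neither the lower bound (monodromy dense in the Mumford group, needed to exclude a smaller Mumford--Tate group) nor the invariant tensors needed for the upper bound. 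As stated, this is a gap, not merely a technical inconvenience, and the same issue infects your proposed identification of the base with $\Delta(2,3,p)\backslash\H$ via numerical periods and orbifold invariants.

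The paper's proof is engineered precisely to avoid this. It never computes monodromy or exhibits Hodge tensors. Instead it (1) builds stacky models $\calX(2,3,7)$, $\calX(2,3,9)$ as root stacks of weighted projective lines (\Cref{def:stacks}), (2) extends the families of Jacobians over these stacks using only the local structure of the degenerate fibers (\Cref{lem:stackextend}), (3) computes the semistable reduction at $t=0,1,\infty$ to evaluate $\deg(e^*\omega_{A/\calX})$ and checks that the Arakelov inequality~\eqref{eq:Arakelov} holds with \emph{equality}, which by the stacky extension of the Viehweg--Zuo criterion (\Cref{thm:VZ-criterion}, \cite{VZ}) already forces the image in $\Ac_4$ to be a Shimura curve --- a finite, exact, certifiable computation with no transcendental input. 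The identification of \emph{which} Shimura curve then comes for free from the orbifold fundamental group (\cite{BehrendNoohi}), Takeuchi's determination of the quaternion algebra attached to an arithmetic triangle group \cite{TakeuchiCommens, TakeuchiFuchs}, and Moonen's classification of Shimura curves in $\Ac_g$ \cite{MoonenCrvs}, which rules out the Albert type IV alternatives by a short field-degree argument. In particular the paper never needs your step (a) (generic $\End = \Z$ is a \emph{consequence} of being of Mumford type, not an input), and your step (b) is subsumed by non-isotriviality. If you want to salvage your approach, the missing ingredient is a certification scheme for the monodromy representation; absent that, the Arakelov-equality route is not just cleaner but is what makes the theorem provable at all from the explicit equations.
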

We found these families while performing a large-scale computation of CM abelian fourfolds contained in the Torelli locus (whose findings will be published elsewhere). We noticed that, after excluding the fibers belonging to the de Jong--Noot families~\cite{DeJong1991}, all but two of the remaining curves appeared to fall into two families. In order to find the equations of these families we first applied the method of~\cite{HPS} for recovering the equation of a genus four curve from a small period matrix. We then interpolated the invariants~\cite{BouchetInvs} of these curves, and, using the reconstruction process of~\cite{BouchetRecon}, obtained explicit equations for the families.
To prove that these families are indeed parametrized by Shimura curves, we proceed as follows. We first introduce the Deligne–Mumford stacks (\Cref{def:stacks}) that serve as the putative moduli stacks of the families. We then extend a criterion of Viehweg–Zuo~\cite[Proposition 0.3]{VZ} to the setting of stacky curves. We verify that the families in \Cref{eq:237,eq:239} satisfy the hypotheses of this criterion by computing their semistable reduction (\Cref{prop:Shimura}). Finally, relying on Moonen’s classification of Shimura curves in $\mathcal{A}_g$~\cite{MoonenCrvs}, we show that our Shimura families are of Mumford type (\Cref{thm:main}).

\subsection{Organization of the article}

We begin \Cref{sec:background} by recalling the definitions of Mumford-type abelian varieties and arithmetic triangle groups. In \Cref{sec:reconstruction} we explain how we found \Cref{eq:237,eq:239} using a combination of our work \cite{BouchetInvs, BouchetRecon, HPS}. In \Cref{sec:mumford}, we then prove the main result showing that the two explicit families indeed give fourfolds of Mumford type. We conclude in \Cref{sec:future} with directions for further research, including connections with K3 surfaces and conjectures relating these families to hypergeometric motives.

\subsection{Code} Some calculations in this paper were verified using the \Magma{} computer algebra system \cite{Magma}. We have additionally written code that gives direct access to the two explicit families. Our code depends on the genus 4 invariants package by Bouchet \cite{BouchetInvs-Github}. All of the code used in this article can be found in the GitHub repository \cite{Github-Mumford}.

Any step performed using \Magma{} will be clearly indicated as such, and the corresponding scripts in the repository will be referenced. We used \Magma{} v2.28-7 for our computations.

\subsection{Acknowledgments}
The authors would like to thank Shiva Chidambaram, Edgar Costa, Andrea Gallese, Nazim Khelifa, Davide Lombardo, Frans Oort, Bjorn Poonen, Jeroen Sijsling, Andrew Sutherland, and Yuwei Zhu for helpful conversations during the writing of this article.

The second named author is supported by MaRDI, funded by the Deutsche Forschungsgemeinschaft (DFG), project number 460135501, NFDI 29/1. The last named author was supported by the Simons Collaboration in Arithmetic Geometry, Number Theory, and Computation via Simons Foundation grant 550033. 

\section{Background}
\label{sec:background}
\subsection{Abelian varieties of Mumford type}
\label{subsec:mumford_defn}

Let $A = \C^g/\Lambda$ be a complex abelian variety of dimension $g$. Letting $V_{\R} \colonequals \Lambda\otimes \R$, the isomorphism $\C^g\to  V_{\R}$ endows $V_{\R}$ with a complex structure, i.e., a matrix $J \in \GL_{\R}(2g, V_{\R})$ such that $J^2 = -I.$ The eigenvalues of the matrix $J$ in turn induce a decomposition of $V_{\R} \cong H_1(A, \Z)\otimes_{\Z}\C$ into two eigenspaces $H^{-1,0}\oplus H^{0,-1}$, equipping $V$ with a $\Z$-Hodge structure of type $\{(-1,0),(0,-1)\}$. A Hodge structure of type $\{(-1,0),(0,-1)\}$ on such a $\Z$-module $\Lambda$ can equivalently be described by a choice of a map \[h:\CC^*\to \GL_{\R}(\Lambda\otimes \R), \hspace{1em} h(a+bi)\mapsto aI +bJ.\]

By tensoring $H_1(A, \ZZ)$ with $\Q$ we switch to considering isogeny classes of abelian varieties instead of isomorphism classes of abelian varieties. We can define a $\Q$-Hodge structure on a real vector space $V_{\R} := V_{\Q}\otimes_{\Q}\R$ for a $\Q$-module $V_{\Q}$ using the map $h$.
One can interpret Hermitian symmetric domains as moduli varieties for polarized Hodge structures. In \cite{MumfordShimura}, Mumford used this to define Shimura curves classifying polarized abelian varieties with a certain Hodge structure; he referred to these families as being of Hodge type. The most commonly known Shimura curves are of PEL type, i.e., they classify abelian varieties equipped with polarization, endomorphism ring, and level structure. But in the same paper Mumford gave a construction of a Shimura curve parametrizing a family of abelian varieties that is of Hodge type, but not of PEL type. These abelian varieties were characterized by a different property, namely that their Mumford--Tate groups are ``small'' in a certain sense.

\begin{definition}
Let $A$ be a complex abelian variety and let $h:\CC^*\to \GL(V_{\R})$ be the homomorphism associated to the $\Q$-Hodge structure on $V_{\Q} = H_1(A, \QQ)$. The \emph{Mumford--Tate group} $\MT(A)$ of $A$ is defined to be the smallest algebraic group $G$ defined over $\QQ$ such that $G_{\R}$ contains the image of $h$ in $\GL(V_{\R})$. 
\end{definition}

Having a ``small'' Mumford--Tate group has some interesting consequences. Suppose $A$ is defined over a number field $F$. Let $\rho_\ell$ be the Galois representation given by letting $\Gal(\overline{F}/F)$ act on the $\ell$-adic Tate-module $T_\ell$. The Mumford--Tate group in some sense ``controls'' what the image $\rho_\ell(\Gal(\overline{F}/F))$ looks like: more precisely, the Mumford--Tate conjecture predicts that
\begin{equation*}
\overline{\rho_\ell\left(\Gal(\overline{F}/F)\right)}^0 \cong {\MT(A)\times_{\QQ} \QQ_\ell} \, ,
\end{equation*}
although the Mumford-Tate conjecture is not known in the case of general Mumford-type abelian fourfolds; see \cite[Section 2.4.7]{Moonen-MTC}.
What is interesting about Mumford's family of abelian varieties is that they generically have endomorphism ring isomorphic to $\ZZ$ despite having a ``small'' Mumford--Tate group. This contrasts with Serre's open image theorem \cite{Serre} which states that, for every abelian variety $A$ with $\End(A) \cong \Z$ and whose dimension is either 2, 6 or odd, $\rho_\ell$ is surjective for all sufficiently large $\ell$.

 We now recall Mumford's construction.
    Let $K$ be a totally real cubic field with archimedean places $v_1, v_2, v_3$, and let $B$ be a $K$-quaternion algebra. Write $\sigma_i: K\to \overline{\QQ},\, i=1,2,3$ for the embeddings of $K$ into $\overline{\QQ}$ and let $B_i = B \otimes_{K, \sigma_i} \overline{\QQ}$. The Galois group $\Gal(\overline{\QQ}/\QQ)$ naturally permutes the $B_i$. This induces a natural semilinear action on $D = B_1\otimes_{\overline{\QQ}} B_2 \otimes_{\overline{\QQ}} B_3$. The corestriction $\Cor_{K/\mathbb{Q}}(B)$ is defined to be the algebra $D^{\Gal(\overline{\QQ}/\QQ)}$ consisting of all elements fixed by this Galois action. 

    \begin{samepage}
    For Mumford's construction, we further assume that 
        \begin{enumerate}
            \item $B$ splits at $v_1$ and is non-split at the other two archimedean places; and
            \item $\Cor_{K/\mathbb{Q}}(B) \cong \Mat_{8,8}(\Q)$. 
        \end{enumerate} 
    \end{samepage}
    Consider the algebraic group $G'$ over $\Q$
    $$G'(\mathbb{Q})=\{x\in B^{\times} \mid x x^\ast =1 \}$$
    and its central extension $G$ with $\mathbb{G}_{m,\Q}$:
    $$
    G(\mathbb{Q}) = \mathbb{Q}^\times \cdot\{x\in B^{\times} \mid x x^\ast =1 \} \subset B^\times.
    $$
    Define the norm map $N: G(\mathbb{Q}) \to \Cor_{K/\mathbb{Q}}(B)$ by $\gamma \mapsto \sigma_1(\gamma)\otimes \sigma_2(\gamma)\otimes \sigma_3(\gamma)$. Since $\Cor_{K/\mathbb{Q}}(B) \cong \Mat_{8,8}(\mathbb{Q})$, this gives us a representation
    $$
    \rho: G \longrightarrow \GL_{8, \QQ}
    $$
    defined on a vector space $V\cong \QQ^8$. 
    Note that $G$ is a $\QQ$-form of the $\mathbb{R}$-algebraic group \[(\mathbb{G}_{m,\mathbb{R}}\times \SL(2, \R)\times \SU(2)\times \SU(2))/(-1,-I,-I,-I)\,.\]
 One can show that this induces a unique alternating form preserved up to scaling by the image of $\rho$ on $V$; i.e., the image of $\rho$ lands inside $\GSp(8, \QQ)$. Let $\Lambda$ be any lattice in $V$. Using $\rho$ and the alternating pairing we can define a polarized Hodge structure of weight $1$ on $\Lambda$ by taking $\CC^*\to \GL(\Lambda_{\R})$ to be the composition $\rho_{\R}\circ h$ where $h$ is defined to be
\[h: \mathbb{C}^\times \to (\R^\times \times \SL(2, \R)\times \SU(2)\times \SU(2))/(-1,-I,-I,-I) \cong G(\mathbb{R)}\]
given by
\[
h(r \cdot e^{i\theta}) = \left(r, \begin{pmatrix}
\cos(\theta) & \sin(\theta)\\
-\sin(\theta) & \cos(\theta)
\end{pmatrix}, I, I \right).
\]
Now $(G, [h])$ is a Shimura datum~\cite[Section 6]{Deligne}. This has the following consequence: letting $H$ be the
connected component of the centralizer of $h$ in $G(\R)$ we get $G(\R)/H$, a disjoint union of bounded symmetric domains (in our case upper halfplanes) whose points parametrize polarized Hodge structures. 
Every arithmetic subgroup $\Gamma \subset G'$ such that $\Gamma(\Lambda) = \Lambda$ gives us a family of polarized abelian varieties over $Y \cong \Gamma  \backslash G(\R)/H$. (If $\Gamma$ is not neat this quotient should be considered as a stack.) By construction, members of these families will generically have Mumford--Tate group isomorphic to $G$.

Note that $Y$ could be disconnected. We will call both $Y$ and the connected components of $Y$ Shimura curves.

\subsection{CM abelian fourfolds of Mumford type} \label{subsec:CM_Mumford}

We now recall some results about the CM points of Shimura curves of Mumford type and the corresponding CM abelian fourfolds.

Let $K$ be a totally real cubic number field and $B$ be a quaternion algebra over $K$ satisfying the assumptions of \Cref{subsec:mumford_defn}. Following \cite[\S3.4]{NootMumford}, let $L$ be a maximal subfield of $B$, and assume that $L$ is a totally imaginary extension of $K$, and also that there does not exist an imaginary quadratic field $F$ such that $L = F K$. Let $\wt{K}$ and $\wt{L}$ be the Galois closures of $K$ and $L$, respectively, and let $H \colonequals \Gal(\wt{K}/\Q)$ and $G \colonequals \Gal(\wt{L}/\Q)$.

Since $L$ is a quadratic extension of a cubic field, there is an injective homomorphism $\varphi: G \rightarrow \Z/2 \wr S_3$. Note that $\Z/2 \wr S_3$ is isomorphic to $\Z/2\times S_4$, which is the automorphism group of a cube. The assumption that $L$ does not contain an imaginary quadratic field implies that $\varphi$ is an isomorphism if $K/\Q$ is not Galois; see \cite[5.1.2]{Dodson}. Otherwise, $K/\Q$ is a Galois $\Z/3$ extension and the image of $\varphi$ is $\Z/2\wr \Z/3\cong \Z/2\times A_4$. This occurs in our examples, as $K=\Q(\zeta_7)^+$ (resp., $\Q(\zeta_9)^+$).

In both the Galois and non-Galois case there is a transitive action of $G$ on the vertices of a cube. We get a subfield $E\subset \wt{L}$ such that the embeddings $E\hookrightarrow \C$ are in bijection with the vertices of the cube. Noot \cite[\S3.6]{NootMumford} shows that taking the four embedding corresponding to a face of the cube defines a CM type $\Phi$ of $E$. Furthermore, he proves that there is a CM abelian variety $A$ of type $(E, \Phi)$ corresponding to a point of the Mumford type Shimura curve associated to $B$. Conversely, every simple Mumford type CM abelian variety arises in this way. (Non-simple CM abelian varieties are the result of allowing $L$ to contain an imaginary quadratic field $F$, in which case $\End^0(A)\cong L\oplus F$.)

\subsection{Triangle groups}
\label{subsec:triangle}

The Shimura curves $\calX_7$ and $\calX_9$ that parametrize the families given in \Cref{thm:main_intro} happen to arise as quotients of the upper half-plane by the action of triangle groups. Let $p,q,r \in \Z_{\geq 2} \cup \{\infty\}$ with $p \leq q \leq r$. The triple $(p,q,r)$ is \textit{spherical}, \textit{Euclidean}, or \textit{hyperbolic} according to whether the value
$$
 1 - \frac{1}{p} - \frac{1}{q} - \frac{1}{r}  
$$
is respectively negative, zero, or positive. These cases correspond to the triangle $T = T(p,q,r)$ with angles $\pi/p$, $\pi/q$, and $\pi/r$ naturally living on the sphere, the Euclidean plane, or two-dimensional hyperbolic space, respectively. Denote by $H$ this natural ambient space of the triangle $T$.

We consider the group of orientation-preserving isometries of $H$ generated by rotations about the vertices of $T$. Let $P, Q$, and $R$ be the vertices of $T$ with angles $\pi/p$, $\pi/q$, and $\pi/r$, respectively, and let $\delta_p, \delta_q$, and $\delta_r$ be the rotations about $P,Q$, and $R$ by $2\pi/p$, $2\pi/q$, and $2\pi/r$. Letting $\Delta = \Delta(p,q,r)$ be the group generated by the rotations $\delta_p, \delta_q, \delta_r$, one can show that $\Delta$ has the presentation
\begin{equation*}
    \Delta = \langle \delta_p, \delta_q, \delta_r \mid \delta_p^p = \delta_q^q = \delta_r^r = \delta_r \delta_q \delta_p =1 \rangle \, .
\end{equation*}
Tesselating $H$ by the translates of the triangle $T$, one can view $\Delta$ as the group of orientation-preserving isometries of $H$ that preserve the tesselation. For more details on triangle groups, see \cite[Chapter II]{Magnus}. 

\begin{figure}[htbp]
    \centering
    \begin{minipage}[b]{0.45\textwidth}
        \centering
        \includegraphics[scale=0.35]{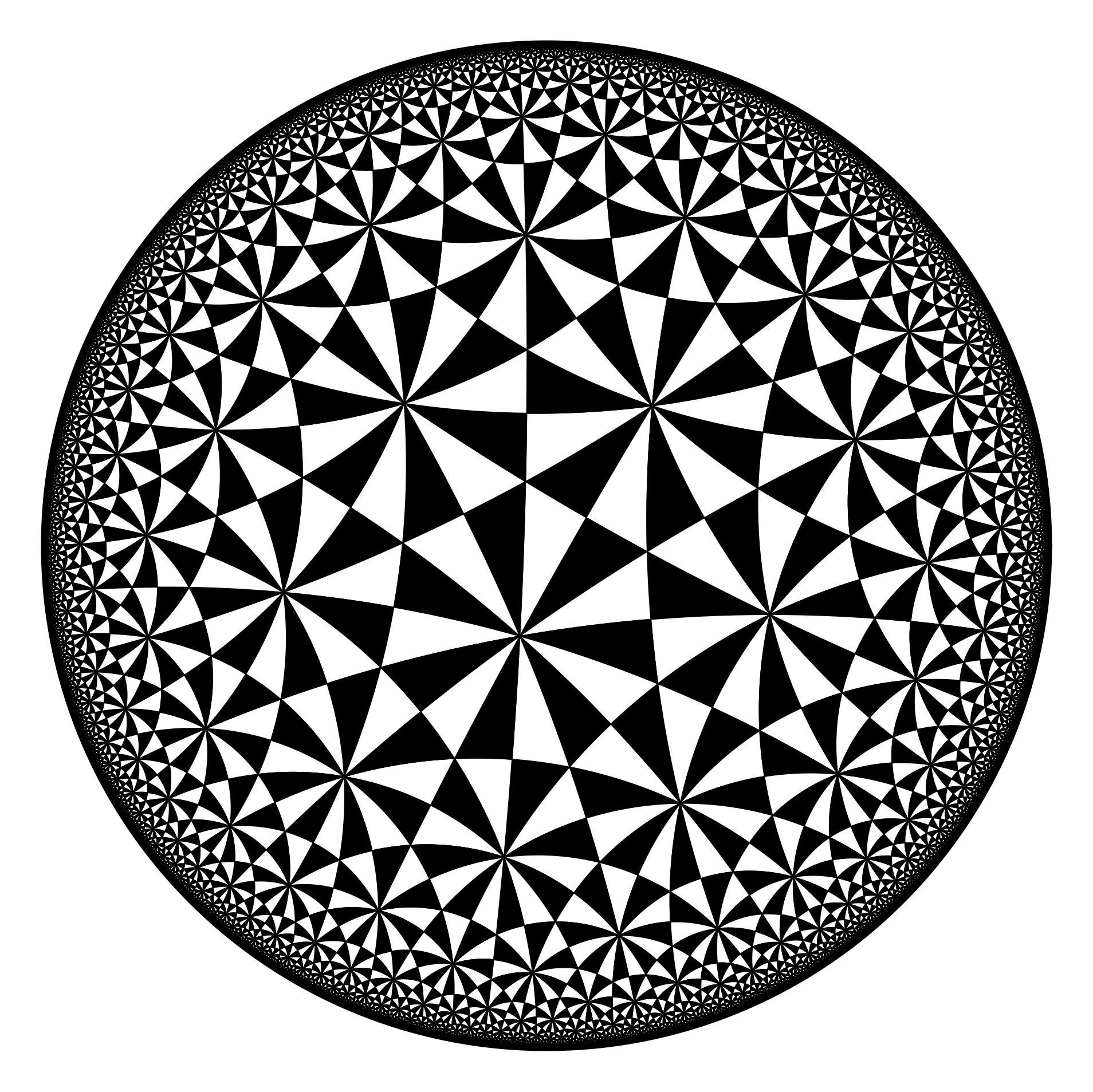}
        \label{fig:tess1}
    \end{minipage}
    \hfill
    \begin{minipage}[b]{0.45\textwidth}
        \centering
        \includegraphics[scale=0.35]{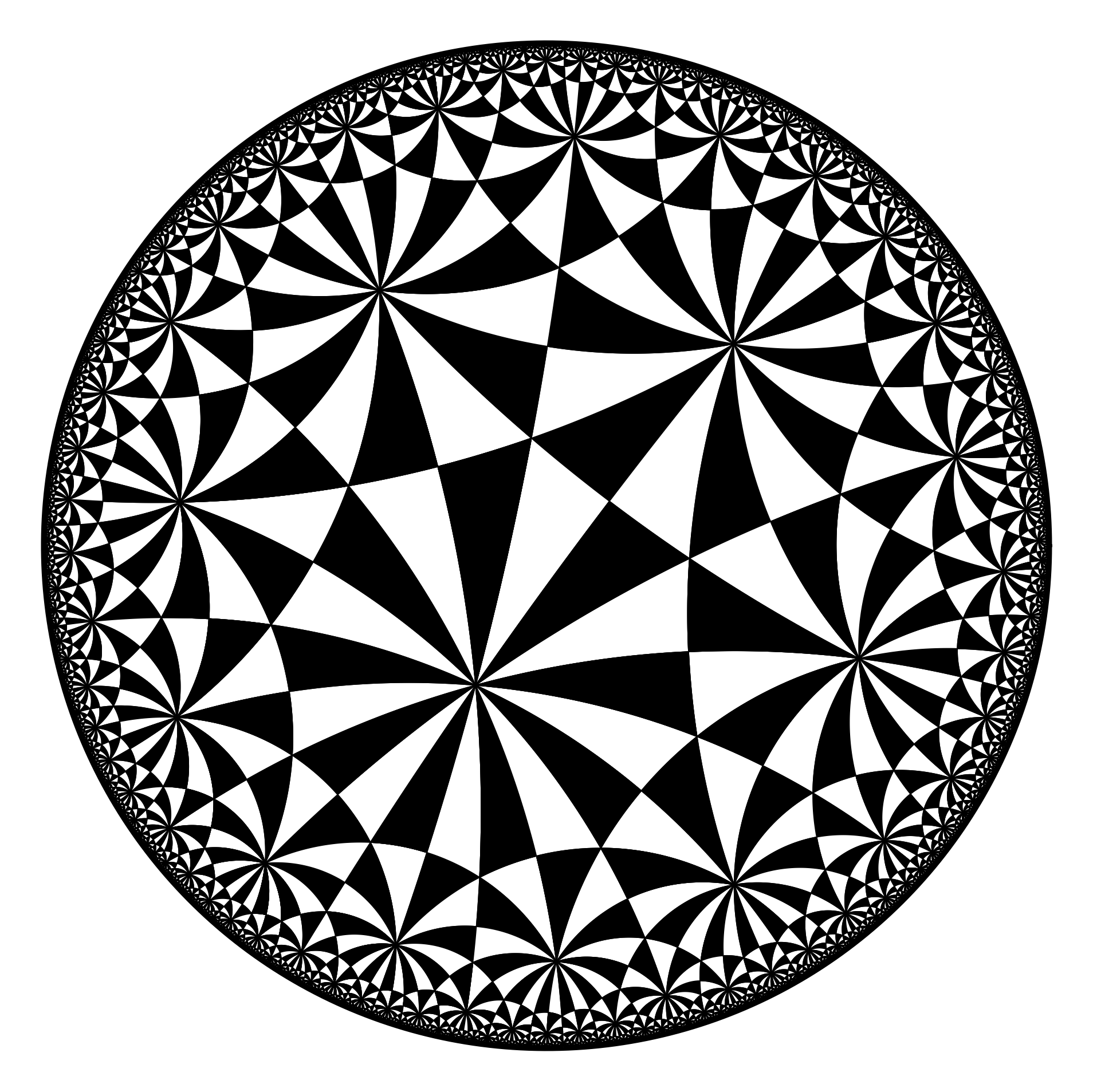}
        \label{fig:tess2}
    \end{minipage}
    \caption{Tesselations with $(p,q,r)=(2,3,7)$, $(2,3,9)$}
    \label{fig:two_tess}
\end{figure}

\begin{rem}
Triangle groups arise as the monodromy groups of the ${}_2 F_1$ hypergeometric differential equation.
In \cite{DeligneMostow}, Deligne and Mostow generalized these groups to higher dimensions, studying discrete subgroups of the group of isometries of the complex hyperbolic ball and their connection to the monodromy of higher dimensional hypergeometric functions. Interestingly, several of these ball quotients give rise to special subvarieties of the Torelli locus. For example, all the special families arising from cyclic covers of $\mathbb{P}^1$ have this property \cite{Moonen_subvars}. 

We hope to explore the connection between the families of curves given in \Cref{thm:main_intro} and hypergeometric functions in future work. For some initial steps in this direction, see \Cref{subsec:hypgeom}.
\end{rem}

A special feature of the triangle groups $\Delta(2,3,7)$ and $\Delta(2,3,9)$ is the fact that they are arithmetic. Let $B$ be a quaternion algebra over a totally real number field $K$ that is split at exactly one real place. Then
\begin{equation*}
    B \hookrightarrow B \otimes_{K} \R \cong \M_2(\R) \times \mathbb{H}^{n-1}
\end{equation*}
where $\mathbb{H}$ denotes the Hamilton quaternions. Let $\iota_\infty$ denote the projection onto the first factor. Given an order $\calO \subseteq B$, let $\calO^1$ be the group of units of $\calO$ of reduced norm $1$. Then $\iota_\infty(\calO^1) \subseteq \SL_2(\R)$. Let $\Gamma(B,\calO)$ be the projectivization of $\iota_\infty(\calO^1)$, i.e., its image under the quotient map $\SL_2(\R) \to \PSL_2(\R)$. A triangle group is \textit{arithmetic} if it is commensurable to $\Gamma(B,\calO)$ for some quaternion algebra $B$ and some order $\calO \subseteq B$ as above.

Takeuchi proved that there are exactly 85 triples $(p,q,r)$ with $p \leq q \leq r$ such that $\Delta(p,q,r)$ is arithmetic \cite{TakeuchiArith}, and further organized them into commensurability classes \cite{TakeuchiCommens}. In the case of $\Delta(2,3,7)$ (resp., $\Delta(2,3,9)$), we have $K = \Q(\zeta_7)^+$ (resp., $K = \Q(\zeta_9)^+$).

By global class field theory, given a subset of places of $K$ with even cardinality there is a unique quaternion algebra over $K$ ramified at exactly those places. Denoting the infinite (real) places of $K$ by $v_1, v_2, v_3$, let $B$ be the quaternion algebra ramified at exactly $v_2$ and $v_3$. Note that such a quaternion algebra satisfies the conditions in Mumford's construction, as its corestriction has trivial Brauer class \cite[Theorem 11]{Riehm}. One can show that the class set of $B$ is trivial, so $B$ has a unique maximal order $\calO$ up to conjugation. As shown in \cite[Table (3)]{TakeuchiCommens}, then $\Gamma(B,\calO)$ is isomorphic to $\Delta(2,3,7)$ (resp., $\Delta(2,3,9)$).

To see this explicitly, consider $K = \Q(\zeta_n)^+,\, n\in \{7,9\}$ with generator $\nu = \zeta_n + \zeta_n^{-1}$. 
Define the quaternion algebra $B_n$ by $i^2=-1, j^2=\nu^2-3$. Then the elements
\begin{align*}
\delta_p \colonequals i, \qquad \delta_q \colonequals \frac{1}{2}+\frac{\nu}{2}i + \frac{1}{2}j, \qquad \delta_r \colonequals \delta_p^{-1} \delta_q^{-1}
\end{align*}
generate a subgroup isomorphic to $\Delta(2,3,7)$ (resp., $\Delta(2,3,9)$).

\begin{rem}\label{rem:2311}
Note that one can also set $n=11$ in these formulas, showing that the $\Delta(2,3,11)$ triangle group is arithmetic (see also \cite{TakeuchiArith}). The center of $B$ is the field $K = \Q(\zeta_{11})^+$ which has degree $5$. Nevertheless, there is an analogue of Mumford's construction (see \cite[\S5]{MoonenCrvs}) giving rise to a Shimura curve $\mathcal{X}_{11}$ in $\mathcal{A}_{16}$. It seems that $\calX_{11}$ is not contained in the Torelli locus \cite[Theorem A]{LuZuo}. (See also \Cref{rem:LuZuoError}.) However, we believe that there is an interesting family of curves which has the universal family of $\calX_{11}$ as a factor of its Jacobian.
\par Moreover, all the families $\mathcal{X}_7$, $\mathcal{X}_9$, and $\mathcal{X}_{11}$ are all related to K3 surfaces (see Section \ref{subsec:K3}), and thus it would also be worthwhile to study $\calX_{11}$ in that context.
\end{rem}

\section{Discovery of the special families $C_{7,t},\, C_{9,t}$} \label{sec:reconstruction}

We discovered the families given in \Cref{eq:237,eq:239} while looking for Jacobians among the principally polarized fourfolds with CM by the fields in the L-functions and modular forms database (LMFDB) \cite{lmfdb}. 
This computation (whose details and results will be published in future work) unexpectedly produced a large number of hyperelliptic examples. All but two of the examples arose from CM fields with Galois closure containing $\Q(\zeta_7)^+$, suggesting that they belonged to a single family parametrized by a Shimura curve. A similar phenomenon appeared in some non-hyperelliptic examples, for which the 
Galois closure of the CM fields contained $\Q(\zeta_9)^+$. In both situations, the CM fields had Galois group $\Z/2 \times A_4$, evoking the results of Noot~\cite{NootMumford} on CM abelian varieties of Mumford type recalled in \Cref{subsec:CM_Mumford}. Taken together, these observations led us to further conjecture that the Shimura curves in question were of Mumford type.

With a sufficient number of curves from these two hypothesized families in hand, we attempted to recover equations for each family by interpolating their invariants~\cite{BouchetInvs}. A rough outline of our strategy is as follows.

\begin{enumerate}
    \item Compute many approximate equations of CM curves that are expected to lie in one of the families, following~\cite{HPS}.
    \item For each of these curves, determine their complex invariants using~\cite{BouchetInvs}. Next, identify algebraic relations between the invariants that heuristically hold for all of the curves, with the ultimate goal of expressing each invariant as a rational function in terms of the minimal number of invariants---in our case, a single invariant. These expressions have coefficients in $\Q$.
    \item Apply the reconstruction formulas from~\cite{BouchetRecon} to derive an exact expression of the family, consistent with the computed invariants.
\end{enumerate}

More precisely, for certain degree 8 CM fields (listed in \Cref{sec:tables}) with Galois group $\Z/2\times A_4$ and Galois closure containing $\Q(\zeta_7)^+$ or $\Q(\zeta_9)^+$, we applied the method of~\cite{DIS-CM, CM-g3}, implemented in \Magma{}~\cite{DIS-Github}, working with a precision of $10{,}000$ decimal digits. This produced the small period matrices of some principally polarized abelian varieties with CM by the maximal order of one of these fields. Observe that all the discriminants of these CM fields are fourth powers. This is not true for all degree 8 CM fields with Galois group $\Z/2\times A_4$ and Galois closure containing $\Q(\zeta_7)^+$ or $\Q(\zeta_9)^+$; the pattern seems to be a consequence of the maximality of the order.

Using an implementation of~\cite{fast-theta} in FLINT \cite{flint}, we evaluated their theta constants to the same precision, and the vanishing behavior of the Schottky--Igusa modular form \cite{Igusa} indicated that all these principally polarized abelian varieties were Jacobians. We were then able to reconstruct approximate equations of the corresponding curves, as described in~\cite[Theorem 3.11]{HPS}.

A classical strategy for computing examples of curves with CM is as follows. Starting from an approximate equation of a CM curve with sufficiently high precision, one can heuristically recover an exact equation by using invariants that classify the curve up to geometric isomorphism~\cite{CM-g2,CM-g3}. Once properly normalized, these invariants lie in the field of moduli of the curve; by Shimura’s reciprocity law~\cite{Deligne}, this field is a number field, so the invariants are algebraic numbers. Identifying them then reduces to applying the LLL algorithm, provided the initial precision is sufficiently high.

Once the exact invariants are determined, the next step is reconstruction: given a set of invariants, find a curve realizing them. This problem was solved by Mestre~\cite{Mestre} for generic hyperelliptic curves and, more recently, for generic hypersurfaces by the first author~\cite{BouchetRecon}.
Our aim was slightly different: rather than reconstructing individual CM curves, we sought to determine equations for the entire families to which we believed these curves belong. Assuming that such families can be described by a small set of parameters, these parameters must appear in the invariants, thereby increasing the number of relations they satisfy.
By studying these relations, one can hope to retrieve such parameters. 

\begin{sloppypar}
In both cases, we chose a distinguished invariant, which we denote by $t$ (different for each family), in terms of which every other invariant can be expressed rationally (and, in the case of the $C_{7,t}$ family, even polynomially), suggesting that the CM curves belong to families parametrized by the projective line $\mathbb{P}^1$. These expressions can be found in the file {\texttt{Shimura/Shimura\_models.m}} in~\cite{Mumford-Github}.
\end{sloppypar}

All that remained was the reconstruction of the equations. After a precomputation step where we expressed the coefficients of the polynomials appearing in the reconstruction formulas~\cite[Cor. 6.1.7 and Prop. 6.3.1]{BouchetRecon}, in terms of the invariant $t$, we were able to derive expressions of $C_{7,t}$ and $C_{9,t}$ of high degrees in $t$. After several minimizing steps, following~\cite{Elsenhans-Stoll}, we obtained \Cref{eq:237,eq:239}.
The process of constructing these equations is described in the file {\texttt{Shimura/construction.m}} in~\cite{Mumford-Github}.

\section{Proofs of the main theorems}\label{sec:mumford}
In this section we prove that the curves given by \Cref{eq:237,eq:239} are Shimura families of Mumford type. We do so by first defining Deligne--Mumford stacks that encapsulate the three elliptic points of $\Delta(2,3,7)$ (resp., $\Delta(2,3,9)$). Then we descend the Jacobian of the families to these stacks and apply a criterion for universal families over Shimura curves due to Viehweg--Zuo \cite[Proposition 0.3]{VZ}. To show that the Shimura families are of Mumford type, we compute the fundamental group of the stack and use the classification of Shimura curves in $\mathcal{A}_g$ due to Moonen \cite{MoonenCrvs}.

\subsection{Construction of $\mathcal{X}_7$ and $\mathcal{X}_9$ as DM-stacks 
}
We begin by introducing the Deligne--Mumford stacks. Since it does not create additional complications, we will treat the general case of triangle groups $\Delta(p,q,r)$. We are thus looking for a smooth DM-stack $\mathcal{X}(p,q,r)$ over $\mathbb{Q}$ with the following properties:
\begin{itemize}
    \item the coarse space is $\mathbb{P}^1_\mathbb{Q}$;
    \item the generic inertia group is $\mathbb{Z}/2$;
    \item it has exactly three elliptic points; and
    \item at the elliptic point of order $p$, (resp., $q$, $r$) the inertia group jumps to $\mathbb{Z}/2p$ (resp., $\mathbb{Z}/2q$, $\mathbb{Z}/2r$).
\end{itemize}
\par We now give our definition of $\mathcal{X}(p,q,r)$.
\begin{definition}\label{def:stacks}
    Let $p,q,r$ be positive integers such that $r$ is odd and $\gcd(p,q)=1$. We define $\mathcal{X}(p,q,r)$ as the $r^{\text{th}}$ root stack of the weighted projective space $\mathbb{P}(2q, 2p)$ at the point $1$.
\end{definition}
By construction $\mathcal{X}(p,q,r)$ is a proper smooth DM-stack with coarse space $\mathbb{P}^1$. We will denote by $t: \mathcal{X}(p,q,r)\rightarrow \mathbb{P}^1$
the map to the coarse space obtained from $\mathbb{P}(2q, 2p)\rightarrow \mathbb{P}^1,\,(x:y)\mapsto \frac{x^p}{y^q}$. The map $t$ is analogous to the map $j: \overline{\mathcal{M}}_{1,1} \rightarrow \mathbb{P}^1$ which sends a moduli point to its $j$-invariant.
\par Consider the open subset $U \colonequals \mathbb{P}^1\setminus\{0,1,\infty \}$ and $\mathfrak{U} \colonequals t^{-1}(U) $.
\begin{lem}\label{lem:UBZ2}
One has $\mathfrak{U} \cong U \times B\mathbb{Z}/2$, where $BG$ denotes the classifying stack.
\end{lem}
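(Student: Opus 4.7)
The plan is to reduce the statement to an explicit toric calculation on $\mathbb{P}(2q,2p)$. Because the $r^{\mathrm{th}}$ root stack construction of \Cref{def:stacks} modifies $\mathbb{P}(2q,2p)$ only above the point $t=1$, the tautological morphism $\mathcal{X}(p,q,r)\to \mathbb{P}(2q,2p)$ is an isomorphism away from that point. Hence $\mathfrak{U}$ is identified with the preimage of $U$ inside $\mathbb{P}(2q,2p)=[(\mathbb{A}^2\setminus\{0\})/\mathbb{G}_m]$ (with weights $(2q,2p)$). The loci $x=0$, $y=0$, $x^p=y^q$ are exactly the fibres over $t=0,\infty,1$, so
\[
\mathfrak{U}\cong\bigl[\,(\mathbb{G}_m^2\setminus V(x^p-y^q))\,/\,\mathbb{G}_m\bigr],
\]
where the action still has weights $(2q,2p)$.

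The key step is to diagonalise this torus action. Using $\gcd(p,q)=1$, pick integers $\alpha,\beta$ satisfying $\alpha p+\beta q=1$ and define the monomial change of variables
\[
\Phi \colon \mathbb{G}_m^2\xrightarrow{\ \sim\ }\mathbb{G}_m^2,\qquad (x,y)\longmapsto (t,w)\colonequals\bigl(x^p y^{-q},\,x^{\beta} y^{\alpha}\bigr).
\]
Its exponent matrix has determinant $\alpha p+\beta q=1$, hence $\Phi$ is an isomorphism of tori over $\mathbb{Z}$. A direct weight computation shows that $\Phi$ intertwines the $(2q,2p)$-action with the $(0,2)$-action, i.e., $\lambda\cdot t=t$ while $\lambda\cdot w=\lambda^{2(\alpha p+\beta q)}w=\lambda^2 w$; moreover $\Phi$ carries the hypersurface $x^p=y^q$ precisely to $\{t=1\}$.

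Pushing this through the quotient, and using the standard fact that $\mathbb{G}_m$ acting trivially on one factor splits the quotient off, we obtain
\[
\mathfrak{U}\cong (\mathbb{G}_m\setminus\{1\})\times [\mathbb{G}_m/\mathbb{G}_m]_{\mathrm{wt}\,2}\cong U\times B\mu_2=U\times B\mathbb{Z}/2,
\]
the second isomorphism because the weight-$2$ action of $\mathbb{G}_m$ on itself is transitive with stabiliser $\mu_2$. The only point to verify carefully is that $\Phi$ really induces an isomorphism of stack quotients, which is immediate from the fact that it is a $\mathbb{G}_m$-equivariant isomorphism of schemes once the target is equipped with the diagonalised action; no subtlety is hidden in the residual $B\mu_2$-gerbe structure. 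Note that the hypothesis ``$r$ odd'' plays no role here, since the root stack modification is concentrated above the omitted point $t=1$; only $\gcd(p,q)=1$ is used, precisely to produce the Bezout coefficients $\alpha,\beta$ that make $\Phi$ a torus isomorphism.
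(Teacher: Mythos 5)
Your proof is correct. It proceeds by a genuinely different (though closely parallel) computation from the paper's: you diagonalise the weight-$(2q,2p)$ action on $\mathbb{G}_m^2$ via the unimodular monomial substitution $(x,y)\mapsto (x^py^{-q},\,x^{\beta}y^{\alpha})$, splitting the quotient stack as (trivial action on the $t$-line) $\times$ (weight-$2$ action on a $\mathbb{G}_m$), whereas the paper instead exhibits an explicit atlas $\mathbb{G}_m\to\mathbb{P}(2q,2p)\setminus\{0,\infty\}$, $s\mapsto(s^a:s^b)$ with $ap-bq=1$, and computes the fibre product $\mathbb{G}_m\times_{\mathbb{P}(2q,2p)}\mathbb{G}_m\cong\mathbb{G}_m\times\mathbb{Z}/2$ to read off the trivial $\mathbb{Z}/2$-gerbe from the groupoid presentation. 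Both arguments hinge on the same Bézout identity coming from $\gcd(p,q)=1$; your version makes the splitting of the gerbe more transparent (the coordinate $t$ literally becomes the coarse-space coordinate and the residual $B\mu_2$ is visible as the stabiliser of the weight-$2$ action, trivialised by the rational point $w=1$), while the paper's version produces the explicit étale chart that is then reused in the proof of Lemma 4.4 (the diagrams (4.3) and (4.5) are built on the same map $s\mapsto(s^a:s^b)$), so the two computations are really two faces of the same untwisting. Your closing remarks are also accurate: only $\gcd(p,q)=1$ enters here, and the parity hypothesis on $r$ is irrelevant to this lemma since the root-stack modification sits over the excised point $t=1$.
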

\begin{proof}
    Since we have $\mathcal{X}(p,q,r) \setminus \{t^{-1}(1)\} = \mathbb{P}(2q, 2p) \setminus \{1\}$, it suffices to show that $\mathbb{P}(2q, 2p) \setminus \{0, 1, \infty\} \cong \mathbb{P}^1\setminus\{0,1,\infty \} \times B\mathbb{Z}/2$. Indeed, 
    consider the map $\mathbb{G}_m \rightarrow \mathbb{P}(2q, 2p)$ given by $s\mapsto (s^a: s^b)$ where $a,b \in \mathbb{Z}$ are chosen to satisfy $ap-bq=1$.

    We want to compute the fiber product $\mathbb{G}_m\times_{\mathbb{P}(2q, 2p)} \mathbb{G}_m$; note that weighted projective space is a quotient by $\mathbb{G}_m$. Thus the fiber product is a closed subscheme of $\mathbb{G}_m^3$. Choosing coordinates $s_1, s_2, \lambda$ on $\mathbb{G}_m^3$ the equations are given by
    \begin{equation}
        \begin{aligned}
        \mathbb{G}_m \times_{\mathbb{P}(2q, 2p)} \mathbb{G}_m&=\{ s_1^a \lambda^{2q}=s_2^a,\, s_1^b \lambda^{2p}=s_2^b\}\\
        &=\{ s_1=s_2,\, \lambda=\pm 1\}\\
        &\cong \mathbb{G}_m \times \mathbb{Z}/2.
        \end{aligned}
    \end{equation}
    This gives an explicit isomorphism $\mathbb{G}_m \times B\mathbb{Z}/2\cong {\mathbb{P}(2q, 2p) \setminus \{0, \infty \}}$.
\end{proof}

\subsection{Viehweg--Zuo criterion}
In this section we will prove the main result of the paper, namely Theorem \ref{thm:main_intro}. The main tool is a criterion of Viehweg--Zuo \cite[Proposition 0.3]{VZ}, slightly generalized to treat the case of DM-stacks.
\begin{thm}\label{thm:VZ-criterion} Let $X/\mathbb{C}$ be a proper smooth Deligne--Mumford curve and\\
${f:A \rightarrow X}$ be a non-isotrivial family of $g$-dimensional principally polarized semi-abelian varieties. Denote by $S\subset X$ the subset of bad places for $f$, i.e., $V \colonequals X\setminus S$ is the largest open subset where the fibers are abelian. Assume that $S$ is finite.
\par Assume that the Arakelov inequality
\begin{equation} \label{eq:Arakelov}
    2 \deg(e^*\omega_{A/X}) \leq g\, \deg(\Omega_X^1(\log S))
\end{equation}
holds with equality, where $e: X\rightarrow A$ denotes the zero section.
Then the image of the induced map $U\rightarrow \mathcal{A}_g$ is a Shimura curve.
\end{thm}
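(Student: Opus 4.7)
The plan is to reduce this statement to the original Viehweg--Zuo theorem for schemes by pulling back the family along a finite étale (in the stack sense) cover $\pi: Y \to X$ whose source $Y$ is an honest smooth proper curve. Such a cover exists for any smooth proper Deligne--Mumford curve over $\mathbb{C}$: using uniformization one can write $X = [\H^*/\Gamma]$ (or a spherical analogue) for some Fuchsian group $\Gamma$, and by Selberg's lemma $\Gamma$ contains a finite-index torsion-free subgroup $\Gamma'$; the corresponding quotient $Y = \H^*/\Gamma'$ is then a smooth projective curve, and the natural map $Y \to X$ is finite étale as a morphism of stacks. (Equivalently, one can pass to a finite Galois cover that trivialises the generic $\mathbb{Z}/2$-inertia and then uses a further cover that kills the local inertia at the stacky points.)

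Next I would check that Arakelov equality is preserved by this pullback. Write $d$ for the degree of $\pi$, set $\tilde{A} \colonequals A \times_X Y$, $\tilde{S} \colonequals \pi^{-1}(S)$, and $\tilde{e}: Y \to \tilde{A}$ for the zero section. Since $\pi$ is étale as a morphism of stacks, one has $\pi^*\Omega^1_X(\log S) = \Omega^1_Y(\log \tilde{S})$ and $\pi^*(e^*\omega_{A/X}) = \tilde{e}^*\omega_{\tilde{A}/Y}$, so
\begin{equation*}
\deg \Omega^1_Y(\log \tilde{S}) = d \cdot \deg \Omega^1_X(\log S), \qquad \deg \tilde{e}^*\omega_{\tilde{A}/Y} = d \cdot \deg e^* \omega_{A/X},
\end{equation*}
where the degrees on $X$ are defined via pullback to any such cover divided by $d$ (and are well-defined because the construction is independent of the cover). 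In particular, the Arakelov inequality $2 \deg(e^*\omega_{A/X}) \leq g \deg \Omega^1_X(\log S)$ holds with equality on $X$ if and only if the corresponding inequality holds with equality for $\tilde{f}: \tilde{A} \to Y$ on the scheme $Y$.

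Applying the original Viehweg--Zuo criterion \cite[Proposition 0.3]{VZ} to $\tilde{f}$, which is again a non-isotrivial family of principally polarized semi-abelian varieties (non-isotriviality is preserved by pullback along a surjection, and the bad locus $\tilde{S}$ is finite because $S$ is), we conclude that the image of $\tilde{V} \colonequals Y \setminus \tilde{S} \to \mathcal{A}_g$ is a Shimura curve $Z \subset \mathcal{A}_g$. Since $\pi$ is surjective, the image of $V \to \mathcal{A}_g$ coincides set-theoretically with the image of $\tilde{V} \to \mathcal{A}_g$, namely $Z$, which finishes the proof.

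The main obstacle is ensuring that the Viehweg--Zuo degree formalism transfers cleanly from schemes to DM curves, i.e.\ that the Hodge bundle and logarithmic cotangent bundle on $X$ extend canonically across the stacky points and pull back as claimed. This hinges on the étaleness of $\pi$ in the stack sense at the stacky points of $X$, which is why one must work with a cover that matches the local inertia (in our application of interest, this is automatic: at a stacky point with inertia $\mathbb{Z}/2n$ one chooses $\Gamma'$ so that a neighbourhood looks like $[D / (\mathbb{Z}/2n)]$ trivialised by a degree-$2n$ cover of the disk). Once this compatibility is in place, all of the numerical invariants entering the Arakelov bound scale by $d$, and the reduction to the classical statement is formal.
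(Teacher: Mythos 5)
Your overall strategy --- reduce to the scheme case of \cite[Proposition 0.3]{VZ} by passing to a finite cover by an honest curve and checking that both sides of the Arakelov (in)equality scale by the degree of the cover --- is the same as the paper's. The gap is in your first step: you assert that \emph{every} smooth proper Deligne--Mumford curve over $\mathbb{C}$ admits a finite \'etale cover by a scheme, via uniformization and Selberg's lemma. This is false in general: a smooth proper DM curve need not be developable (e.g.\ a teardrop $\mathbb{P}(1,n)$, a football with coprime orders, or a gerbe over such), and for such $X$ the inertia groups do not inject into $\pi_1(X)$, so no finite \'etale cover can be representable by a scheme. What the hypotheses actually force to be hyperbolic is the open part $V = X\setminus S$ (by a Beauville-type argument using the non-isotrivial family), not the compactified stack $X$ itself, so uniformizing $X$ directly is not available. (In the paper's application $S=\emptyset$ and $X=\mathcal{X}(2,3,n)$ is hyperbolic with $\pi_1$ an arithmetic lattice, so your argument happens to go through there; but the theorem is stated, and used, for a general proper smooth DM curve.)

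The fix, which is what the paper does, is to require the cover $\phi\colon X'' \to X$ to be \'etale only \emph{outside} $S$: take a finite \'etale scheme cover $V' \to V$ (which exists by hyperbolicity of $V$ and \cite[Corollary 7.7]{BehrendNoohi}), compactify to $X'$, and then compose with a further cover $X''\to X'$ sufficiently ramified over $S'$ so that the rational map to $X$ extends to a morphism. The price is that $\phi$ is now ramified over $S$, so your identity $\pi^*\Omega^1_X(\log S)=\Omega^1_Y(\log \tilde S)$ is no longer automatic; one must redo the Riemann--Hurwitz count and observe that the ramification contribution $(e_x-1)$ at each point over $S$ is exactly absorbed by the extra logarithmic pole, so that $\deg\Omega^1_{X''}(\log S'')$ still equals $d\cdot\deg\Omega^1_X(\log S)$, while the Hodge bundle side scales by $d$ with no hypothesis on ramification. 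With that bookkeeping in place, the rest of your reduction --- non-isotriviality and Arakelov equality are preserved, and the images in $\mathcal{A}_g$ coincide --- is correct.
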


\begin{proof}
Let $\phi: X' \rightarrow X$ be a finite map of smooth Deligne--Mumford curves \'etale outside $S$. Let $d\in \mathbb{Q}$ be the degree of $\phi$ as defined in \cite[Definition 1.15]{Vistoli}.
We claim that
\begin{equation}\label{eq:proofthmVZ}
\begin{aligned}
    \deg(e'^*\omega_{A'/X'})&= d \cdot \deg(f_*\omega_{A/X})\\
    \deg(\Omega_{X'}^1(\log S'))&= d \cdot\deg(\Omega_X^1(\log S))
\end{aligned}
\end{equation}
where $f': A' \rightarrow X'$ is the base-change of $f$, the map $e': X'\rightarrow A'$ is the zero-section of $f'$, and $S'=\phi^{-1}(S)$. In fact, the first equation in (\ref{eq:proofthmVZ}) is even true without assumptions on the ramification behavior of $\phi$ and follows from the isomorphism $e'^*\omega_{A'/X'} \cong \phi^* e^*\omega_{A/X}$. The second equation in \ref{eq:proofthmVZ} is well known for schemes (Riemann--Hurwitz). We are going to provide an argument that works for all proper smooth DM-curves. Indeed, we have the short exact sequence
$$0\rightarrow \phi^*\Omega_X^1 \rightarrow \Omega^1_{X'} \rightarrow \Omega_{X'/X} \rightarrow 0\,.$$
Because the ground field is algebraically closed of characteristic $0$, we know that locally around a ramification point $x\in X$ the map $\phi$ is of the form
$$[\mathbb{A}^1/G']\rightarrow [\mathbb{A}^1/G],\quad t\mapsto t^e$$
where $e=e_x$ is the ramification index at $x$.
Therefore, the stalk $\Omega_{X'/X,x}$ is isomorphic to $\frac{k[t]}{t^{e-1} k[t]}$. To see that
$$\deg\hspace{-0.1em}\left(\phi^*\hspace{-0.1em}\left(\Omega_{X}^1\hspace{-0.1em}\left(\log S\right)\right)\right)=\deg\hspace{-0.1em}\left(\Omega^1_{X'}\hspace{-0.1em}\left(\log S'\right)\right)$$
one can now argue similarly to the case of schemes: on the lefthand side, the point $x$ contributes $\frac{e}{|G'|}$. On the righthand side $x$ contributes $\frac{(e-1)+1}{|G'|}$, where the $(e-1)$ comes from comparing $\Omega^1_X$
 with $\phi^*\Omega^1_{X'}$ and the $1$ comes from $\log S'$. As a consequence we obtain 
 $$
 \deg(\Omega_{X'}^1(\log S'))=\deg\hspace{-0.1em}\left(\phi^*\left(\Omega_{X}^1\left(\log S\right)\right)\right)  = d \deg(\Omega_X^1(\log S)) \, ,
 $$
 which proves the claim.
\par Next, we claim that there exists a finite map $\phi: X' \rightarrow X$, \'etale outside $S$ such that $X'$ is a smooth scheme. First, we observe that $V=X\setminus S$ is hyperbolic, that is, the universal cover of $V^\text{an}$ is the upper half-plane (one can argue as in \cite[Proposition 1]{Beauville}). (For more discussion on hyperbolic DM-stacks, see \cite{BehrendNoohi}.) In particular, \cite[Corollary 7.7]{BehrendNoohi} implies that there is a smooth irreducible $\mathbb{C}$-scheme $V'$ and a finite \'etale map $V'\rightarrow V$. Let $X'$ be the smooth projective irreducible curve over $\mathbb{C}$ having $V'$ as an open dense subset. Now the map $V' \rightarrow V$ does not always extend to a map $X' \rightarrow X$. However, because of the properness of $X$ \cite[Th\'eor\`eme 7.3]{LaumonMB}, the composition $X'' \rightarrow X' \dashrightarrow X $ will extend to a regular map provided that $X''\rightarrow X'$ is a map sufficiently ramified at the points $S'=X'\setminus U'$. (This argument is analogous to \cite[Lemma V.5.1]{Hartshorne}.) Now the hyperbolicity of $V'$ implies the existence of a finite Galois map $X''\rightarrow X'$ \'etale outside $S'$ such that the ramification indices are sufficiently divisible to ensure that the composition $X'' \rightarrow X' \dashrightarrow X $ is a regular map. Then the map $\phi:X'' \rightarrow X$ is \'etale outside $S$ and $X''$ is a scheme. This proves the claim.
\par Using Equations \ref{eq:proofthmVZ} and the map $\phi$ constructed above we can now reduce to the case where $X$ is a scheme. In this case the theorem is known and the references to the proofs are as follows. The inequality \ref{eq:Arakelov} is proven in \cite[Lemme 3.2]{DeligneAra}. If the inequality \ref{eq:Arakelov} holds with equality, then we can apply \cite[Proposition 0.3]{VZ}, in the case where, in their notation, $g_0=g$. The theorem follows.
\end{proof}
\begin{rem}
     \par As pointed out in \cite[Remark 5.9]{MoonenCrvs} there is a mistake in the article of Viehweg--Zuo. In \cite[Theorem 0.7]{VZ} they attempt to classify Shimura curves where the Arakelov inequality \ref{eq:Arakelov} does not hold with equality, for example PEL-Shimura curves in $\mathcal{A}_{2g},\, g\geqslant 2$ with endomorphism algebra a degree $2g$ CM field. Moonen provides an explicit example not covered by \cite[Theorem 0.7]{VZ}. 
    The proof of \cite[Proposition 0.3]{VZ}, which is the only result of Viehweg--Zuo used in the present paper, is correct as written.
\end{rem}

Our next goal is to construct families of principally polarized abelian varieties over $\mathcal{X}(2,3,7)$ (resp., $\mathcal{X}(2,3,9)$) that are (up to descent and twist) the relative Jacobians of the families \ref{eq:237} (resp., \ref{eq:239}). For this purpose we have the following lemma.
\begin{lem}\label{lem:stackextend}
Let $p,q,r$ and $U \colonequals \PP^1 \setminus \{0, 1, \infty\}$ be as before. Assume in addition that $p$ is even. Let $A\rightarrow U$ be a family of abelian varieties. Assume that:
\begin{enumerate}
    \item[i)]  $A$ has potentially good reduction at $0$ and good reduction after a totally ramified extension of degree $2p$, say $\QQ\llbracket t^{1/2p} \rrbracket / \QQ\llbracket t \rrbracket$. Moreover, we ask that the inertia group of the subextension $\QQ\llbracket t^{1/2p} \rrbracket / \QQ\llbracket t^{1/p} \rrbracket$ acts by $[-1]$;
    \item[ii)] $A$ has potentially good reduction at $1$ and good reduction after a totally ramified extension of degree $r$; and
    \item[iii)] $A$ has potentially good reduction at $\infty$ and good reduction after a totally ramified extension of degree $q$.
    
\end{enumerate}
Then there is a family of abelian varieties $\mathcal{A} \rightarrow \mathcal{X}(p,q,r)$ extending $A$.

\end{lem}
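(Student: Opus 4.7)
The plan is to construct $\mathcal{A}$ in three stages: descend $A$ onto $\mathfrak{U}$, extend across each of the three special points $t \in \{0, 1, \infty\}$, and glue.

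\emph{First}, I would use that every abelian variety carries the involution $[-1]$: this endows $A \to U$ with a canonical $\mathbb{Z}/2$-equivariant structure. By \Cref{lem:UBZ2} and effective descent for abelian schemes along the $\mathbb{Z}/2$-torsor $U \to \mathfrak{U} = U \times B\mathbb{Z}/2$, this produces a family $\mathcal{A}_0 \to \mathfrak{U}$ whose pullback to $U$ is $A$.

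\emph{Second}, since $\mathcal{X}(p,q,r) \setminus \mathfrak{U}$ consists of three reduced points, the extension problem is étale-local around each. Under the hypotheses---$p$ even, $\gcd(p,q)=1$, $r$ odd, which together force $q$ odd---the inertia groups at the special points are $\mu_{2p}$ at $t=0$, $\mu_2 \times \mu_r$ at $t=1$, and $\mu_2 \times \mu_q$ at $t=\infty$. At $t=1$ and $t=\infty$ the $\mu_2$ factors are exactly the generic inertia, while at $t=0$ the generic $\mu_2$ sits as the unique order-$2$ subgroup of the cyclic group $\mu_{2p}$. At each special point I would work on an étale chart $\Spec(R)$ with $R$ a complete DVR and residual inertia action. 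Assumptions (i), (ii), (iii) each state precisely that the pullback of $A$ to such a chart extends to an abelian scheme (good reduction after a totally ramified extension of the appropriate degree). The Néron mapping property then extends the inertia action on the generic fiber uniquely to the closed fiber.

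\emph{Third}, I would descend each local extension back to the stack. This requires the inertia action to restrict to $[-1]$ on the generic $\mu_2$, matching Step~1. At $t=1$ and $t=\infty$ this is automatic: the generic $\mu_2$ splits off as a direct factor (since $r$ and $q$ are odd), and the unique extension of the generic $[-1]$-action must act by $[-1]$ on this factor. At $t=0$ the sequence $1 \to \mu_2 \to \mu_{2p} \to \mu_p \to 1$ is non-split, and the required compatibility is exactly the last clause of assumption (i), identifying the generic $\mu_2$ with $\Gal(\Q\llbracket t^{1/2p}\rrbracket / \Q\llbracket t^{1/p}\rrbracket)$ and demanding that it act by $[-1]$. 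Descent along each étale chart then extends $\mathcal{A}_0$ across the corresponding special point, and uniqueness of extensions of abelian schemes ensures that the three local extensions glue with $\mathcal{A}_0$ to give the desired $\mathcal{A} \to \mathcal{X}(p,q,r)$.

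The main obstacle I anticipate is the compatibility at $t=0$: the non-split extension of $\mu_{2p}$ over $\mu_p$ intertwines the generic $\mathbb{Z}/2$ with the local stabilizer $\mu_p$, and assumption (i) is formulated precisely to handle this interaction. The analogous checks at $t=1$ and $t=\infty$ are more routine because the relevant extensions split.
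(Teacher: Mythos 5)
Your proposal is correct and follows the same architecture as the paper's proof: descend $A$ to $\mathfrak{U}$ via $[-1]$ using \Cref{lem:UBZ2}, then extend across the three stacky points, with conditions i)--iii) supplying exactly the local input. The difference lies in how the local descent data at the stacky points are pinned down. The paper does this by explicit computation: it exhibits the cartesian diagrams \eqref{eq:cartdiagstackextend1} and \eqref{eq:cartdiagstackextend2} comparing the charts $x\mapsto(x:1)$ and $x\mapsto(1:x)$ of $\mathbb{P}(2q,2p)$ with the section $s\mapsto(s^a:s^b)$ (with $a$ chosen even), and checks that the resulting $\mu_2$-torsor over the punctured chart is nontrivial at $0$ and trivial at $\infty$; this is where the quadratic twist, and hence the extra clause of i), enters. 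You instead read the same conclusion off the group theory of the inertia: since $p$ is even, $\mu_{2p}$ is cyclic with a unique order-$2$ subgroup, so $1\to\mu_2\to\mu_{2p}\to\mu_p\to 1$ is non-split, every lift of a generator of $\mu_p$ has $p$-th power acting by $[-1]$, and the generic $\mu_2$ is forced to be $\Gal(\Q\llbracket t^{1/2p}\rrbracket/\Q\llbracket t^{1/p}\rrbracket)$ acting by $[-1]$; at $1$ and $\infty$ the odd-order parts split off and no twist can occur (any homomorphism from $\mu_q$ or $\mu_r$ to $\{\pm 1\}$ is trivial for $q,r$ odd). This is a legitimate and arguably more conceptual replacement for the paper's fiber-product computations, and it has the advantage of not depending on the particular trivialization $s\mapsto(s^a:s^b)$. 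One sentence in your Step 2 needs repair: at $t=0$ the pullback of $A$ itself to the \'etale chart $\Spec\Q\llbracket t^{1/p}\rrbracket$ does \emph{not} extend to an abelian scheme --- only its quadratic twist, i.e.\ the pullback of the descended family $\mathcal{A}_0$, does --- so assumption i) does not ``state precisely'' that the pullback of $A$ to the chart extends; your Step 3 correctly supplies the missing twist, but Step 2 as literally written is false at that point.
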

\begin{proof}
By Lemma \ref{lem:UBZ2} we have $\mathfrak{U}\cong U\times B\mathbb{Z}/2$. Since every abelian variety has the automorphism $[-1]$, the family $A$ descends to $\mathfrak{U}$.
\par Next, we want to understand when the family can be extended from the open substack $\mathfrak{U}\subset \mathcal{X}(p,q,r)$ to the three points $t^{-1}(0,1, \infty)$. Let us first look at $0$. Indeed, the question is local at $0$ and therefore we can work with the stack $\mathbb{P}(2q, 2p)$. Consider the following commutative diagram, where $a,b \in \mathbb{Z}$ are chosen to satisfy $ap-bq=1$. For reasons that will become clear later, we will choose $a$ to be even. (This is possible because $q$ is odd and $a, b$ can be replaced by $a+q, b+p$.)
\begin{equation}\label{eq:cartdiagstackextend1}
\begin{tikzcd}
\mathbb{G}_m \arrow{r}{s\mapsto s^{2p}}\arrow{d}[swap]{s\mapsto s^{2}}& \mathbb{G}_m \arrow{d}{s\mapsto (s^a: s^b)}\\
\mathbb{A}^1 \arrow{r}[swap]{x\mapsto (x : 1)}& \mathbb{P}(2q, 2p)
\end{tikzcd}
\end{equation}
Notice that the right vertical map is the same as the map in the proof of Lemma \ref{lem:UBZ2}. We claim that diagram (\ref{eq:cartdiagstackextend1}) is cartesian. Indeed, the fiber product $
\mathbb{A}^1 \times_{\mathbb{P}(2q, 2p)} \mathbb{G}_m$ is a closed subscheme of $\mathbb{A}^1 \times \mathbb{G}_m \times \mathbb{G}_m$ with coordinates $x,s,\lambda$:
$$ \mathbb{A}^1 \times \mathbb{G}_m \times \mathbb{G}_m=\left\lbrace x \lambda^{2q}=s^{a},\, \lambda^{2p}=s^{b}\right\rbrace\,.
$$
Here the projection maps are $(x,s, \lambda) \mapsto x$ and $(x,s, \lambda) \mapsto s$, respectively. Now, since $p$ is even, the equation $ap-bq=1$ implies that $b$ and $2p$ are coprime. Consequently, the fiber product $\mathbb{A}^1 \times_{\mathbb{P}(2q, 2p)} \mathbb{G}_m$ identifies with $\mathbb{G}_m$ and the maps onto the two factors are the ones in diagram \ref{eq:cartdiagstackextend1}. 
\par Next, we point out that the map $\mathbb{A}^1 \rightarrow \mathbb{P}(2q, 2p)$ is \'etale. Thus, to check whether the family $A$ extends to $0$, it suffices to check on $\mathbb{A}^1$. For this purpose, we compute the descent datum along the left vertical map in the diagram (\ref{eq:cartdiagstackextend1}). Indeed, $\mathbb{G}_m\times_{ \mathbb{A}^1, \, s\mapsto s^2} \mathbb{G}_m \cong \mathbb{G}_m \times \mathbb{Z}/2$ and the natural map
$$\mathbb{G}_m\times_{ \mathbb{A}^1, \, s\mapsto s^2} \mathbb{G}_m \rightarrow \mathbb{G}_m\times_{ \mathbb{P}(2q, 2p)} \mathbb{G}_m\cong \mathbb{G}_m \times \mathbb{Z}/2$$
identifies with the map $(s\mapsto s^{2p})\times \id_{\mathbb{Z}/2}$. Therefore, since descent data are compatible under pull-backs, we see that the descent datum for the family $A$ along $\mathbb{G}_m \rightarrow \mathbb{A}^1, s\mapsto s^2$ is a quadratic twist. This shows i).
\par To show ii) we need to understand when the family $A$ extends to the point $t^{-1}(1)$. Now, Zariski locally around $t^{-1}(1)$ the map $\mathcal{X}(p,q,r) \rightarrow \mathbb{P}(2q, 2p)$ can be identified with
$$[\mathbb{A}^1/(\mu_{r}\times \mathbb{Z}/2)] \rightarrow [\mathbb{A}^1/(\mathbb{Z}/2)],\, x\mapsto x^r$$
where $\mu_r$ acts on $\mathbb{A}^1$ by multiplication and $\mathbb{Z}/2$ acts trivially. This shows that $A$ extends to the point $t^{-1}(1)$ provided that $A$ has potentially good reduction after a totally ramified extension of degree $r$.
\par The proof of iii) is similar to i). This time we look at the cartesian diagram
\begin{equation}\label{eq:cartdiagstackextend2}
\begin{tikzcd}
\mathbb{G}_m \arrow{r}{s\mapsto s^{q}}\arrow{d}[swap]{s\mapsto s}& \mathbb{G}_m \arrow{d}{s\mapsto (s^a: s^b)}\\
\mathbb{A}^1 \arrow{r}[swap]{x\mapsto (1 : x)}& \mathbb{P}(2q, 2p).
\end{tikzcd}
\end{equation}
Here we used that $a$ is even. Now it follows that $A$ extends to the point $t^{-1}(\infty)$ on $\mathcal{X}(p,q,r)$ if and only if $A$ has potentially good reduction at $\infty$ after a totally ramified extension of degree $q$.
\end{proof}


We now prove that the families \ref{eq:237}, \ref{eq:239} satisfy~\Cref{lem:stackextend}, and are Shimura families. In order to do so, we compute the semistable reduction at the three singular fibers $\{0,1,\infty\}$ for each family. We chose to include these calculations, as these semistable fibers are interesting in their own right. For example, after \Cref{thm:main} is shown, one knows that all these curves have complex multiplication: Indeed, since they have extra automorphisms but the generic endomorphism algebra of a Mumford type Shimura curve is $\Z$, their moduli points must lie in a zero dimensional Shimura subvariety; hence they have CM. More precisely,  we have the following corollary to \Cref{prop:Shimura} and \Cref{thm:main}, which are proved later.

\begin{cor}\label{cor:endos}
    The geometric endomorphism algebras $\End^0_{\overline{\Q}}$ of the semistable fibers of $C_{n,t}, \, n=7,9$ at $t_0 = 0, 1, \infty$ are given by the following table:
    \begin{table}[h]
        \centering
        \begin{tabular}{c|c|c}
             $t_0$ & $n=7$ & $n=9$ \\
             \hline
             $0$ &  $\Q(i)\oplus \Q(\zeta_7)^+(i)$&  $\Q(i)\oplus \Q(\zeta_9)^+(i)$\\
             $1$ &  $\Q(\sqrt{-7})\oplus \Q(\zeta_7)$ &  $\Q(\zeta_3)\oplus \Q(\zeta_9)$\\
             $\infty$ & $\Q(\zeta_3)\oplus \Q(\zeta_7)^+(\zeta_3)$& $\Q(\zeta_3)\oplus \Q(\zeta_9)$ 
        \end{tabular}
        \label{tab:endos}
    \end{table}
\end{cor}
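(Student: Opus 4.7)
The plan is to combine the stacky Shimura description from \Cref{prop:Shimura} and \Cref{thm:main} with Noot's classification of CM fourfolds of Mumford type reviewed in \Cref{subsec:CM_Mumford}. Under the descent of $C_{n,t}$ to a family over the stacky curve $\mathcal{X}(2,3,n)$, the three values $t_0 \in \{0, 1, \infty\}$ are precisely the three elliptic (stacky) points, and by \Cref{lem:stackextend} the inertia at these points jumps from the generic $\Z/2 = \langle [-1]\rangle$ to cyclic groups of orders $2p = 4$, $2r = 2n$, and $2q = 6$, respectively. Hence each $\Jac(C_{n,t_0})$ carries a cyclic subgroup of $\Aut(\Jac(C_{n,t_0}))$ of order $4$, $2n$, or $6$, containing $[-1]$ as its unique element of order $2$.

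Because a Mumford-type Shimura curve has generic geometric endomorphism algebra $\Z$ (\Cref{subsec:mumford_defn}), any fiber carrying extra endomorphisms lies in a zero-dimensional special subvariety and is therefore CM. I would then invoke Noot's classification \cite{NootMumford} (see \Cref{subsec:CM_Mumford}): every CM point of such a curve with totally real cubic field $K = \Q(\zeta_n)^+$ has endomorphism algebra either $L$ (simple case) or $L \oplus F$ (non-simple case), where $L/K$ is the CM quadratic extension attached to that point and $F \subset L$ is an imaginary quadratic subfield. The cyclic automorphism subgroups identified above already embed nontrivial imaginary quadratic fields into $\End^0$, so each of our six fibers falls in the non-simple case.

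Next I would read off $F$ from the cyclotomic field generated by the extra automorphism: at $t_0 = 0$ one gets $\Q(\zeta_4) = \Q(i) \hookrightarrow \End^0$, so $F = \Q(i)$; at $t_0 = \infty$ one gets $\Q(\zeta_6) = \Q(\zeta_3) \hookrightarrow \End^0$, so $F = \Q(\zeta_3)$; and at $t_0 = 1$ one gets $\Q(\zeta_{2n}) = \Q(\zeta_n) \hookrightarrow \End^0$, so $F$ is the unique imaginary quadratic subfield of $\Q(\zeta_n)$, namely $\Q(\sqrt{-7})$ for $n = 7$ and $\Q(\zeta_3)$ for $n = 9$. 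Since $K$ is totally real while $F$ is imaginary, $K \cap F = \Q$, so the compositum $K \cdot F$ has degree $6$ and must coincide with the Mumford field $L$. Expanding the compositum in each of the six cases (for instance $\Q(\zeta_7)^+(\sqrt{-7}) = \Q(\zeta_7)$ and $\Q(\zeta_9)^+(\zeta_3) = \Q(\zeta_9)$) recovers exactly the entries of the table.

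The main obstacle I anticipate is verifying that the order-$2n$ automorphism at $t_0 = 1$ really embeds the full cyclotomic field $\Q(\zeta_n)$ into $\End^0$ rather than a proper cyclotomic subfield, and dually that the CM fourfold is genuinely isogenous to the predicted product $E \times B$ with $\dim E = 1$ and $\dim B = 3$. Both checks can be carried out on the explicit semistable models of $C_{n,1}$ produced in the proof of \Cref{prop:Shimura} by examining the action of the extra automorphism on $H^0(C_{n,1}, \Omega^1)$: its eigenvalues should be four primitive $n$-th roots of unity forming a face of the Noot cube of \Cref{subsec:CM_Mumford}, after which the splitting $\End^0 = L \oplus F$ and the identification of $L$ follow from Noot's structural result.
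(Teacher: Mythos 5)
Your proposal arrives at the correct table and rests on the same two pillars as the paper's proof---the elliptic-point structure of $\mathcal{X}(2,3,n)$ at $t_0\in\{0,1,\infty\}$ and Noot's classification of CM points on Mumford curves---but it pins down the CM field by a different mechanism. The paper chooses the Hodge parameter $h$ of the fiber so that it is centralized by the explicit elliptic element $\delta\in B_n$ fixing that point, concludes that the image of $h$ lies in the torus $T_L=\Res_{L/\Q}\mathbb{G}_m\cap G$ with $L=K(\delta)$, hence that the Mumford--Tate group of the fiber is $T_L$, and only then invokes Noot. You instead read off the order of the extra automorphism from the inertia jump, determine which cyclotomic field it generates inside $\End^0$, and let Noot's dichotomy (simple with $\End^0=L$ versus non-simple with $\End^0=L\oplus F$, $F\subset L$ imaginary quadratic) force the answer. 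Your route is more elementary and avoids the explicit presentation of $B_n$, at the price of some eigenvalue bookkeeping that the Mumford--Tate argument sidesteps; the paper's route identifies the Mumford--Tate torus directly, which is the sharper intermediate statement. Both give $L=K(\delta)=K\cdot F$ and the same $F$.

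Two points in your version need care, neither fatal. First, at $t_0=1$ you must show that the order-$2n$ automorphism $\sigma$ (with $\sigma^n=-1$) has a primitive $2n$-th root of unity among its eigenvalues on $H_1$, so that the full $\Q(\zeta_{2n})=\Q(\zeta_n)$, and not a proper subfield, sits inside $\End^0$. You flag this and propose a check on the semistable model; it can also be closed abstractly: every eigenvalue $\lambda$ satisfies $\lambda^n=-1$, so for $n=7$ it has order $2$ or $14$ and for $n=9$ order $2$, $6$, or $18$, and if no eigenvalue were primitive of order $2n$ then $\sigma$ would have order dividing $2$ (resp.\ $6$), contradicting the faithfulness of the inertia action. (That faithfulness is not immediate from \Cref{lem:stackextend} alone, but follows once \Cref{thm:main} identifies $\mathcal{X}_n^{\mathrm{an}}$ with $[\Gamma\backslash\H]$: the stabilizer in $\Gamma$ of a point of $\H$ acts faithfully on $V=H_1$ because $\rho$ is injective on $G'(\Q)$.) You should also be slightly careful with idempotents at $t_0=\infty$ and $t_0=1$, where $\Q[\sigma]$ may be a product of fields rather than a field, so the quadratic field need not land unitally in the $F$-summand; uniqueness of the imaginary quadratic subfield of $L=K\cdot F$ (as $\Gal(L/\Q)$ is cyclic of order $6$) rescues the identification of $F$ in all cases. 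Second, a small slip: the eigenvalues of $\sigma$ on $H^0(C_{n,1},\Omega^1)$ are primitive $2n$-th roots of unity, not primitive $n$-th roots, and the ``face of the Noot cube'' is a CM type of the degree-$8$ field $E$ of \Cref{subsec:CM_Mumford}, not the eigenvalue set of $\sigma$. None of this affects the final table.
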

\begin{proof}
     Let $A_0$ be the principally polarized abelian variety corresponding to the point $0\in \calX(2,3,7)$, i.e., the Jacobian of the semistable fiber of $C_{7,t}$ at $t_0=0$. Denote by $h: \mathbb{C}^* \rightarrow G(\R)$ the group homomorphism corresponding to $A_0$ as in section \ref{subsec:mumford_defn}. Notice that $h$ is not unique because of the action of the arithmetic group $\Gamma$. However, since $0\in \calX(2,3,7)$ is the elliptic point with an inertia jump of order $2$, we can choose $h$ so that it is centralized by the element $\delta_a = i  \in B_7$ (where $B_7 $ is as in \Cref{subsec:triangle}). Denoting by $L$ the subfield of $B_7$ generated by $i$, this implies that the image of $h$ is contained in the torus $T_L:=\Res_{L/\Q} \mathbb{G}_m \cap G$. Therefore, $T_L$ equals the Mumford-Tate group of $A_0$. Since $\calX(2,3,7)$ is a Shimura curve of Mumford type, we may use the results of Noot \cite[Section 3.2]{NootMumford} (see also \Cref{subsec:CM_Mumford}) to determine the endomorphism algebra of $A_0$. The other cases are analogous.
\end{proof}

\begin{rem}
    The corollary implies that all these abelian varieties are isogenous to a product of an elliptic curve and a threefold. As we will see below, for the fibers at $1$ this is even a decomposition as a product of principally polarized abelian varieties because the semistable fiber is reducible.

    The two entries in the table for $n = 9$ and $t_0=1,\infty$ are the same because $\Q(\zeta_9)=\Q(\zeta_9)^+(\zeta_3)$. However, the endomorphism algebras are different orders in $\End^0$.
\end{rem}

\medskip

Let us now compute the semistable reduction of the fibers at $0,1,\infty$ of the family $C_{7,t}$.
For completeness we present the changes of variables leading to the semistable reduction, but they are also available in the \Magma{} files \verb|237.m| and \verb|239.m|, in the GitHub repository \cite{Mumford-Github}.

Recall that the family $C_{7,t}$ is given by \Cref{eq:237}:
\begin{equation}
       \begin{aligned}
          C_{7,t}: y^2 &= t\left(\left(t - \frac{27}{16}\right) x^{10} - \frac{567}{64} x^{9} - \frac{189}{4} t x^{8} + \left(-84 t^{2} - \frac{189}{4} t\right)\right. x^{7}\\ 
          &\left.\quad - 189 t^{2} x^{6} - \frac{189}{2} t^{2} x^{5} + 84 t^{3} x^{4} + 108 t^{3} x^{3} - 28 t^{4} x\right) \, .
       \end{aligned}
   \end{equation}
   One can check using the file \verb|discriminant.m| from \cite{Github-Mumford} that the discriminant of the family $C_{7,t}$ is equal to $2^{36}\cdot 3^{36} \cdot 7^{10} \cdot t^{54}(t-1)^{12}$. We will now compute the semistable reduction~\cite{Liu, Kunzweiler} of the family at the singular fibers $\{0, 1, \infty\}$.

At the same time, we also want to keep track of the contributions to the lefthand side of the Arakelov inequality. For the latter purpose, we consider the rational section of $e^*\omega_{C_{7,t}/\mathbb{P}^1}$ \[\omega_0 := \frac{dx}{2y} \wedge x \frac{dx}{2y} \wedge \cdots \wedge  x^{g-1} \frac{dx}{2y},\] and note that for any integers $i$ and $j$, the transformation $(x, y) = (t^ix', t^jy')$ acts on $\omega_0$ by multiplication by $t^{\frac{g(g+1)}{2}i-gj}$. With that in mind, we will later be able to compute the degree of $e^*\omega_{A/\mathcal{X}(2,3,7)}$ from the order of zeros and poles of the rational section $\omega_0$ after showing that the family $\Jac(C_{7,t})\rightarrow \mathbb{P}^1$ descends to a family $A\rightarrow \mathcal{X}(2,3,7)$.

\begin{lem}\label{lem:red7_0}
    The semistable reduction of $C_{7,t}$ at $0$, obtained after a totally ramified extension of degree $4$, is \[y^2 = x(x^8 + 16/3x^6 + 32/3x^4 - 256/21x^2 + 256/81).\] It can also be realized as the reduction of a quadratic twist of the family obtained after a totally ramified extension of degree $2$.

    Moreover, the local contribution to the lefthand side of the Arakelov inequality is $-6$.
\end{lem}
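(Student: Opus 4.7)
The plan is to compute the semistable reduction by an explicit change of variables on the coarse space and then read off the Arakelov contribution directly from the transformation formula for $\omega_0$. Concretely, I would pass to the degree-$4$ totally ramified extension $t = u^4$ and substitute $(x, y) = (u^2 x', u^{11} y')$, equivalently $(x, y) = (t^{1/2} x', t^{11/4} y')$. Expanding \Cref{eq:237} and dividing both sides by $u^{22}$, a monomial $c_{k,\ell}\, t^{\ell+1} x^k$ of the right-hand side becomes $c_{k,\ell}\, u^{4\ell + 2k - 18}\, x'^k$; the terms that survive at $u = 0$ are precisely those with $2\ell + k = 9$ (all of which have $k$ odd), giving the smooth genus-$4$ hyperelliptic fiber
\[
y'^2 = -\tfrac{567}{64} x'^9 - \tfrac{189}{4} x'^7 - \tfrac{189}{2} x'^5 + 108\, x'^3 - 28\, x'.
\]
Setting $x' = -x$ and $y' = \tfrac{9\sqrt{7}}{8}\, y$, and using $\bigl(\tfrac{9\sqrt{7}}{8}\bigr)^2 = \tfrac{567}{64}$, then rescales this to the stated equation; the remaining coefficients $\tfrac{16}{3}$, $\tfrac{32}{3}$, $-\tfrac{256}{21}$, $\tfrac{256}{81}$ are verified by direct arithmetic.

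The $\sqrt{7}$ appearing in this rescaling is precisely the quadratic twist referred to in the lemma. If instead I use only the degree-$2$ extension $t = u^2$ with $(x, y) = (u x', u^5 y')$, the same five monomials dominate but each carries an additional factor of $u$, so the model takes the form $y'^2 = u \cdot f(x') + O(u^2)$; this is the quadratic twist by $u$ of $y'^2 = f(x')$, and the further degree-$2$ extension $u = s^2$ (so $s^4 = t$) trivializes the twist and reproduces the same smooth reduction.

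For the Arakelov contribution, along a fiber one has $dt = 0$, so $x^k \tfrac{dx}{2y}$ transforms under $(x, y) = (t^i x', t^j y')$ by the factor $t^{(k+1) i - j}$, and consequently $\omega_0$ scales by $t^{g(g+1)/2 \cdot i - g j} = t^{10 i - 4 j}$. With $(i, j) = (\tfrac{1}{2}, \tfrac{11}{4})$ this equals $t^{-6}$, so $\omega_0 = t^{-6} \omega_0'$, where $\omega_0'$ is a regular nonvanishing section of $e^*\omega_{A/\mathcal{X}(2,3,7)}$ near $0$ on the semistable model. The local contribution at $0$ to $\deg e^*\omega_{A/\mathcal{X}(2,3,7)}$ is therefore $-6$. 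The main obstacle is mostly bookkeeping: checking that no additional monomials contribute at leading order in $u$, verifying that the $\sqrt{7}$ rescaling gives exactly the claimed coefficients, and confirming that the reduction polynomial is squarefree so that the special fiber is smooth of genus $4$; once the change of variables is fixed, the Arakelov exponent $-6$ falls out directly from the transformation formula stated just before the lemma.
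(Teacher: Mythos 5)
Your proposal is correct and follows essentially the same route as the paper: the paper performs the substitution $t=s^2$, $x=sx'$, $y=s^5y'$ to exhibit the fiber as a quadratic twist by the uniformizer, then passes to the further degree-$2$ extension, and computes the Arakelov contribution as $\frac{10\cdot 2-4\cdot 11}{4}=-6$ from the same transformation rule for $\omega_0$; you merely package the full degree-$4$ substitution in one step and carry out the final rescaling to the stated normal form explicitly. The only quibble is that the quadratic twist referred to in the lemma (and needed for condition i) of Lemma \ref{lem:stackextend}) is the twist by the uniformizer $u$ that you describe in your second paragraph, not the constant twist by $-567/64$ absorbed via $\sqrt{7}$ --- but since you give the correct account immediately afterwards, this is a harmless misattribution.
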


\begin{proof}
    To compute the reduction at $0$ we perform the substitution 
            $t=s^2,\, x=s x',\, y=s^5 y'$. This transforms the equation into
            \begin{equation}
                \begin{aligned}
                    y'^2 &= s \Bigg( \left(s^3 - \frac{27}{16} s\right) x'^{10} - \frac{567}{64} x'^{9} - \frac{189}{4} s x'^{8} + \left(-84 s^2 - \frac{189}{4}\right) x'^{7}\\ 
                    &\quad - 189 s x'^{6} - \frac{189}{2} x'^{5} + 84 s x'^{4} + 108 x'^{3} - 28 x' \Bigg)\, ,
                \end{aligned}
            \end{equation}
            which has good reduction after a quadratic twist verifying condition i) of Lemma~\ref{lem:stackextend}.
            The contribution to the lefthand side of the Arakelov inequality for that fiber is thus $\frac{10\cdot 2 - 4\cdot 11}{4} = -6$.

\end{proof}

\begin{lem}\label{lem:red7_1}
    The semistable reduction of $C_{7,t}$ at $1$, obtained after a totally ramified extension of degree $7$, is the union of
    \begin{enumerate}
        \item the elliptic curve \[y^2 = x^3 - \frac{45}{28}x + \frac{27}{28},\text{ and }\]
        \item the genus $3$ hyperelliptic curve \[y^2 = x^7-3.\]
    \end{enumerate}
    Moreover, the local contribution to the lefthand side of the Arakelov inequality is $-\frac{9}{7}$.
\end{lem}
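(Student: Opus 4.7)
My plan is to adapt the strategy used in \Cref{lem:red7_0} to the current setting, where the special fiber at $t=1$ splits into two irreducible components. I would begin by making the totally ramified degree-$7$ base change $t = 1 + u^7$, which realizes the order-$r=7$ inertia jump at $t^{-1}(1)\in\mathcal{X}(2,3,7)$ required by hypothesis (ii) of \Cref{lem:stackextend}. No additional quadratic twist is needed here, since the generic $\mu_2$ inertia of $\mathcal{X}(2,3,7)$ is already absorbed by the $[-1]$-automorphism inherent to any abelian variety. I would then search for two distinct scaling substitutions in the curve coordinates $(x,y)$ whose leading $u$-adic behavior isolates each of the two expected components. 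A substitution $x = \alpha + u^a x_1,\, y = u^b y_1$, with $\alpha$ a root of appropriate multiplicity of the specialization at $t=1$ of the polynomial on the right of \eqref{eq:237}, and with $(a,b)$ chosen to clear leading powers of $u$, should (after minimization in the sense of \cite{Liu,Kunzweiler}) yield the elliptic component $E : y^2 = x^3 - \tfrac{45}{28}x + \tfrac{27}{28}$. A separate substitution in another chart should produce the genus-$3$ hyperelliptic component $H : y^2 = x^7-3$.

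Gluing the two local models along their common node yields the stable fiber; since the dual graph is a tree, $\Jac(C_{7,t})$ acquires good reduction isomorphic to $\Jac(E)\times\Jac(H)$ at $t=1$. Thus hypothesis (ii) of \Cref{lem:stackextend} is satisfied, and $\Jac(C_{7,t})$ extends, in a neighborhood of $t=1$, to a family $A \to \mathcal{X}(2,3,7)$.

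Finally, I would compute the local Arakelov contribution by tracking the rational section $\omega_0 = \bigwedge_{i=0}^{3} (x^i\,dx)/(2y)$ through the above substitutions, using the scaling rule recalled before \Cref{lem:red7_0}: with $g=4$, replacing $(x,y)$ by $(u^a x',u^b y')$ multiplies $\omega_0$ by $u^{10a-4b}$. I expect that the cumulative effect, together with the base change $t-1 = u^7$, yields $\mathrm{ord}_u \omega_0 = -9$. Since the point of $\mathcal{X}(2,3,7)$ over $t=1$ has inertia group $\mu_{14}$, this translates into a contribution of $-9/14$ to $\deg\bigl(e^*\omega_{A/\mathcal{X}(2,3,7)}\bigr)$ and hence a contribution of $-9/7$ to the LHS of the Arakelov inequality~\eqref{eq:Arakelov}. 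The main obstacle is pinning down the precise scaling substitutions: unlike at $t=0$, no uniform global scaling works, so one must analyze the Newton polygon of the specialized polynomial on each component separately and verify that the two resulting reductions coincide with the stated equations of $E$ and $H$ after minimization. This is a finite calculation which can be checked against the Magma script~\verb|237.m| provided in~\cite{Github-Mumford}.
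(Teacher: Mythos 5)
Your overall strategy---a totally ramified degree-$7$ base change $t-1=u^7$, followed by rescalings adapted to the Newton polygon at a multiple root of the specialized polynomial, then tracking the rational section $\omega_0$---is the same as the paper's. But two points need correction, one of which is a genuine gap. The minor one: you have the two components the wrong way around. At $t=1$ the right-hand side of \eqref{eq:237} acquires a root of multiplicity $7$, so the naive specialization at $t=1$ already \emph{is} the elliptic component (the squarefree part of the specialized degree-$10$ polynomial has degree $4$); it is the genus-$3$ curve $y^2=x^7-3$ that only emerges after recentering at that multiple root and rescaling (in the paper: a M\"obius change of chart followed by $x'=s^2x''$, $y'=s^7y''$). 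Zooming in at a root of high multiplicity produces the collapsed piece, not the visible one.

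The more serious issue is the Arakelov bookkeeping. The rule $\omega_0\mapsto u^{10a-4b}\,\omega_0$ is valid only when the rescaled fiber is irreducible of genus $4$, so that all four differentials $x^i\,dx/(2y)$ rescale coherently; here the stable fiber is reducible, and a correct integral basis of $e^*\omega$ near $u=0$ has three elements supported on the genus-$3$ component, which rescale according to the genus-$3$ rule $6a-3b$ (giving $6\cdot 2-3\cdot 7=-9$ for the scalings above), and one supported on the elliptic component, which does not rescale at all. Note that $10a-4b$ is always even, so no integer scalings in your scheme can ever produce the odd order $-9$; applying the uniform genus-$4$ rule to the actual scalings would give $-8$ and hence the incorrect contribution $-8/7$. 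You must split the basis of differentials between the two components, as is done (in a more elaborate form, with the $\min(0,\cdot)$ corrections) for $C_{9,t}$ in \Cref{lem:red9_1}. Your conversion from $\operatorname{ord}_u\omega_0=-9$ to a contribution of $-9/14$ to $\deg(e^*\omega)$ and $-9/7$ to the left-hand side of \eqref{eq:Arakelov} is otherwise consistent with the paper's normalization, where the generic $\mu_2$ inertia is accounted for globally in the proof of \Cref{prop:Shimura}.
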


\begin{proof}
    By setting $t=1$, we see that the elliptic curve $y^2 = x^3 - \frac{45}{28}x + \frac{27}{28}$ with $j$-invariant $-3375$ is a component of the semistable reduction. 
    To find the missing component, we first substitute $t = t'+1$ to reduce to the case where $t' = 0$. After the transformation $(x,y)=\left(\frac{2}{x'-1}, \frac{y'}{(x'-1)^5}\right)$, we substitute $t' = s^7$, $x'=s^2x''$, and $y' = s^7y''$, and after specializing to $s = 0$, the genus $3$ curve $y^2 = x^7 -3$ appears as the other component. Thus, condition ii) of Lemma~\ref{lem:stackextend} is satisfied.

    This time, the contribution to the Arakelov inequality is \[\frac{6\cdot 2 - 3\cdot 7}{7} = -\frac{9}{7}.\]
\end{proof}

\begin{lem}\label{lem:red7_inf}
     The semistable reduction of $C_{7,t}$ at $\infty$, obtained after a totally ramified extension of degree $3$, is given by \[y^2 = x(x^{9} - 84x^6 + 84x^3 - 28).\] 
     Moreover, the local contribution to the lefthand side of the Arakelov inequality is $\frac{22}{3}$.
\end{lem}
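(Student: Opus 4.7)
The strategy is to parallel Lemmas \ref{lem:red7_0} and \ref{lem:red7_1}: produce an explicit change of variables realizing a degree-$3$ totally ramified extension at $\infty \in \mathbb{P}^1$, verify directly that the resulting special fiber is the advertised smooth genus-$4$ curve, and extract the Arakelov contribution from the resulting transformation of $\omega_0$.

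Guided by the fact that the top-$t$-degree monomial on the right-hand side of \Cref{eq:237} is the single term $-28\,t^{4}\,x$ (coming from the factor of $t$ outside the parenthesis and the lowest-degree term in $x$), I would introduce the uniformizer $s = t^{-1/3}$ and perform the substitution $x = s^{-1}\,x'$, $y = s^{-8}\,y'$. These exponents are chosen so that, after multiplying through by $s^{16}$, the $-28\,t^{4}\,x$ monomial contributes the leading constant-in-$s$ term $-28\,x'$ and all other monomials contribute equal or strictly greater powers of $s$. Specializing at $s = 0$ should then yield
\[
y'^{2} \;=\; x'^{10} - 84\,x'^{7} + 84\,x'^{4} - 28\,x' \;=\; x'\bigl(x'^{9} - 84\,x'^{6} + 84\,x'^{3} - 28\bigr),
\]
as claimed. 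Smoothness of this reduction reduces to squarefreeness of the right-hand side, which follows immediately from a discriminant check on the cubic $u^{3} - 84\,u^{2} + 84\,u - 28$ (in $u = x^{3}$) together with the observation that its constant term is nonzero, so that $x = 0$ is not a root of the degree-$9$ factor. This establishes good reduction after the degree-$3$ extension, verifying condition iii) of \Cref{lem:stackextend} with $q = 3$; the underlying polynomial arithmetic can also be cross-checked in \Magma{} via \verb|237.m| in \cite{Github-Mumford}.

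For the Arakelov contribution, I would apply the transformation rule preceding \Cref{lem:red7_0}: with $(x,y) = (t^{1/3}\,x', t^{8/3}\,y')$, so $(i,j) = (1/3, 8/3)$, the section $\omega_0$ is multiplied by $t^{\frac{g(g+1)}{2} i - g j} = t^{10\cdot(1/3) - 4\cdot(8/3)} = t^{-22/3}$; equivalently $\omega_0$ vanishes to order $22$ in the cover-uniformizer $s = t^{-1/3}$. Dividing by the cover degree $n = 3$ yields the local contribution $\tfrac{22}{3}$, matching the pattern $\tfrac{10\,a_s - 4\,b_s}{n} = \tfrac{10\cdot(-1) - 4\cdot(-8)}{3}$ used in the previous two lemmas. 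The only conceptual subtlety relative to \Cref{lem:red7_0} is that no additional quadratic twist is needed here, as the reduced equation above is already smooth and irreducible without further adjustment; this is the expected behaviour at the elliptic point of order $q = 3$ on $\mathcal{X}(2,3,7)$, where (in the notation of diagram \Cref{eq:cartdiagstackextend2}) the generic $\mathbb{Z}/2$ inertia is already absorbed by the stacky structure. The main bookkeeping challenge is simply selecting the correct exponents $(-1, -8)$ in the substitution; everything else is routine polynomial manipulation.
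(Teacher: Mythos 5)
Your proposal is correct and follows essentially the same route as the paper: the substitution $t=s^{-3}$, $x=s^{-1}x'$, $y=s^{-8}y'$ is exactly the paper's $t=s^3$, $x=sx'$, $y=s^8y'$ with the uniformizer inverted, it produces the same smooth reduction $y^2=x(x^9-84x^6+84x^3-28)$, and the Arakelov contribution $\tfrac{10\cdot(-1)-4\cdot(-8)}{3}=\tfrac{22}{3}$ is the paper's computation written with the opposite sign convention. The extra squarefreeness check via the cubic in $u=x^3$ is a welcome detail the paper leaves implicit.
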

\begin{proof}
    For the fiber at $\infty$, the substitution $t = s^3$, 
    $x = sx'$, $y = s^{8}y'$ yields after specialization to $s = \infty$ the smooth curve
    \[y^2 = x(x^{9} - 84x^6 + 84x^3 - 28),\]
    hence verifying condition iii) of Lemma~\ref{lem:stackextend}.
    The final contribution to the Arakelov inequality is \[-\frac{10\cdot 1 - 4\cdot 8}{3} = \frac{22}{3}.\] 
\end{proof}

We now consider the family $C_{9,t}$. Recall that this family is given by the system of equations \ref{eq:239}:
\begin{equation}
  \begin{aligned}
    C_{9,t}: 0 &= X Z-Y^{2}\\
    0 &= 3W^3 + t(t - 1)\Big( ( 5X^2 + 6XY + 2tYZ + 3tZ^2 )3W \\
    &+ (-2t + 9)X^3 + 22tX^2Y + 21tX^2Z + (-14t^2 + 18t)XYZ \\
    &+ t^2XZ^2 + 6t^2YZ^2 + (-3t^3 + 6t^2)Z^3 \Big) \, .
  \end{aligned}
  \end{equation} 

It suffices to restrict to the affine open given by $Z = 1$. This allows us to set $X = Y^2$ so we can reduce to looking at a single equation in $\mathbb{A}^2$:
\begin{equation}
  \begin{aligned}
    F_9: -3t^5 &- 14t^4Y^3 + t^4Y^2 + 6t^4Y + 9t^4 - 2t^3Y^6 + 22t^3Y^5 +
    21t^3Y^4 \\
    &+ 32t^3Y^3 - t^3Y^2 + 6t^3YW - 6t^3Y + 9t^3W - 6t^3 +
    11t^2Y^6 \\
    &-22t^2Y^5 + 15t^2Y^4W - 21t^2Y^4 + 18t^2Y^3W -
    18t^2Y^3 - 6t^2YW \\
    &- 9t^2W - 9tY^6 - 15tY^4W - 18tY^3W + 3W^3.
  \end{aligned}
  \end{equation}

    One can check using the file \verb|discriminant.m| from \cite{Github-Mumford} that the discriminant of the family of genus $4$ curves $C_{9,t}$ is equal to $2^{72}\cdot 3^{34}\cdot  t^{52} (t-1)^{28}$.
    As with the previous family, we compute the semistable reduction~\cite{Liu, LLLGR} of $C_{9,t}$ at the singular points $\{0,1,\infty\}$, while also calculating the contributions to the lefthand side of the Arakelov inequality. This time, we choose the rational section \[\omega_0 := \frac{dY}{dF_{9, W}}\wedge\frac{WdY}{dF_{9, W}}\wedge\frac{Y dY}{dF_{9,W}}\wedge\frac{Y^2dY}{dF_{9, W}}.\]

The transformation $(Y, W) = (t^i\,Y', t^j\,W')$ acts on $\omega_0$ by multiplication by $t^{7i+5j}$. Next, dividing $F_9$ by $t^k$ replaces $\omega_0$ by $t^{4k}\, \omega_0$. 
\begin{samepage}
\begin{lem}\label{lem:red9_0}
    The semistable reduction of $C_{9,t}$ at $0$, obtained after a totally ramified extension of degree $4$, is the hyperelliptic curve \[y^2 = x(x^8 - 4x^6 + 6x^4 - 44/27x^2 + 1).\]Moreover, after a totally ramified extension of degree $2$ a quadratic twist of the Jacobian of $C_{9,t}$ has good reduction.
    The local contribution to the Arakelov inequality is $-6$.
\end{lem}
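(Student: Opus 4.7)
The plan is to imitate the treatment of $C_{7,t}$ in \Cref{lem:red7_0}, adapted to the complete-intersection presentation of $C_{9,t}$ in $\PP^3$. Working in the affine chart $Z=1$ (so that the quadric $XZ=Y^2$ forces $X=Y^2$), the curve is cut out by the single equation $F_9(Y,W,t)=0$. I would search for integers $a$, $b$, $k$ and set $t=s^2$, $Y=s^aY'$, $W=s^bW'$, then divide $F_9$ by $s^k$, obtaining a polynomial $\widetilde F_9(s,Y',W')$ whose fiber $\widetilde F_9(0,Y',W')=0$ is smooth after a quadratic twist. This would verify hypothesis~(i) of \Cref{lem:stackextend} for $C_{9,t}$ at $t=0$: the degree two extension $s^2=t$ makes a quadratic twist of the Jacobian acquire good reduction, and a further degree two extension $\pi^2=s$ trivializes the twist, yielding a total degree four totally ramified extension over which $C_{9,t}$ itself has smooth reduction.

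The delicate step is to identify the resulting reduction with the stated hyperelliptic curve $y^2=x\bigl(x^8-4x^6+6x^4-\tfrac{44}{27}x^2+1\bigr)$. Since the cubic in $W$ degenerates to $3W^3=0$ at $t=0$, the hyperelliptic structure is visible only after the rescaling. After the substitution I expect the defining equation at $s=0$ to become a quadratic in $W'$, so that eliminating $W'$ produces an equation $y^2=f(x)$ with $\deg f=9$, which should coincide with the stated curve after a scalar change of variables. Since a purely monomial substitution cannot balance the $W^3$ term against the $W$-linear terms and the $Y^6$ term simultaneously for an integer rescaling, the correct change of coordinates will likely involve a Tschirnhaus-type shift $W\mapsto W'+\alpha(Y,s)$ that produces the missing $(W')^2$ term and exposes the hyperelliptic involution. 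This computation can be verified explicitly in the \Magma{} file \verb|239.m| of the repository~\cite{Github-Mumford}.

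Finally, the Arakelov contribution follows from the transformation rule stated before the lemma: under $(Y,W)=(\pi^iY',\pi^jW')$ and division of $F_9$ by $\pi^k$ on the degree four extension $\pi^4=t$, the rational section $\omega_0$ is multiplied by $\pi^{7i+5j-4k}$. Once the correct $\pi$-exponents (including the contribution of the quadratic twist) are determined from the substitution, this exponent equals $-24$, and dividing by the inertia order $|\mu_{2p}|=4$ yields the stated local contribution~$-6$. The principal obstacle is the middle step: because $C_{9,t}$ is canonically (hence non-hyperelliptically) embedded for generic $t$, while the special fiber is hyperelliptic, the passage between the two birational models is a genuine change of birational model rather than a mere rescaling, and requires careful manipulation of the degenerate cubic in $W$.
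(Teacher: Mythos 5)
Your outline does follow the paper's general strategy---pass to the affine equation $F_9$ in $(Y,W)$, rescale after $t=s^2$ together with a shift in $W$, identify the special fiber, and read off the Arakelov contribution from the transformation rule---but it misses the one structural fact on which the whole argument turns, and the mechanism you describe for the middle step is not what actually happens. After the correct substitution $t=s^2$, $Y=sY'$, $W=s^3(-W'-\tfrac13 Y')-s^2$ and division by $s^8$, the equation takes the form $(Y'^3-W')^2 = s\,h(Y',W') + s^2 g(Y',W')$. The fiber at $s=0$ is therefore \emph{not} a genuine quadratic in $W'$ from which one eliminates $W'$, and it is certainly not a smooth curve after a quadratic twist of the curve: it is the non-reduced scheme $(Y'^3-W')^2=0$ (the genus-$4$ analogue of the situation in \cite[Section 2]{LLLGR}). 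The smooth model appears only after a \emph{further} quadratic base change $s=s'^2$ combined with the normalization $x=Y'$, $y=(Y'^3-W')/s'$, which at $s'=0$ forces $W'=x^3$ and yields $y^2=h(x,x^3)=\tfrac13\,x\bigl(x^8-4x^6+6x^4-\tfrac{44}{27}x^2+1\bigr)$. The ``quadratic twist'' clause of the lemma is then a statement about the Jacobian---the deck transformation $s'\mapsto -s'$ acts by the hyperelliptic involution, i.e.\ by $[-1]$, which is exactly condition (i) of \Cref{lem:stackextend}---and not about smoothness of the curve over $\QQ\llbracket t^{1/2}\rrbracket$; your proposal conflates these two assertions.

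The same omission breaks your Arakelov bookkeeping. The exponents of the actual substitution, written in terms of $s'$ with $t=s'^4$, are $i=2$, $j=6$, $k=16$, so your formula $7i+5j-4k$ gives $-20$, not $-24$. The missing $-4$ is produced precisely by the normalization step: comparing $\omega_0$ with the hyperelliptic basis $\tfrac{dx}{y}\wedge\tfrac{x\,dx}{y}\wedge\tfrac{x^2dx}{y}\wedge\tfrac{x^3dx}{y}$ introduces a factor $s'^{-4}$ coming from the division by $s'$ in $y=(Y'^3-W')/s'$; it has nothing to do with ``the contribution of the quadratic twist.'' As written, your computation would give a local contribution of $-5$ rather than $-6$, and the Arakelov equality in \Cref{prop:Shimura} would fail. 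So while your plan points in the right direction (you correctly anticipate the need for a non-monomial shift in $W$ and flag the change of birational model), the proof is incomplete without exhibiting the perfect-square structure of the degenerate equation and the attendant degree-two base change and normalization.
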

\end{samepage}
\begin{proof}
    Once we substitute 
    \[t = s^2, Y = s Y', W = s^3(-W'-1/3Y')-s^2,\] we can divide $F_9(s,Y',W')$ by $s^8$. We find
    {\footnotesize
    \begin{align*}
        (Y^3-W)^2 =\  &s\left(\frac{17}{9}Y^5 - \frac{5}{3} Y^4W - \frac{125}{81}Y^3 + \frac{1}{9}Y^2W + \frac{1}{3}YW^2 + \frac{1}{3}Y + \frac{1}{3}W^3 + W\right)\\
            &+s^2g(Y,W).
    \end{align*}
    }
    Specializing to $s = 0$ gives the equation $(Y^3-W)^2 = 0$, so this type of equation can be seen as an analogue of \cite[Section 2]{LLLGR} in genus $4$. By setting $s = s'^2, x = Y, y = \frac{Y^3-W}{s'}$, we resolve the singularities, and find that the hyperelliptic curve of genus $4$
    \[y^2 = x(x^8 - 4x^6 + 6x^4 - 44/27x^2 + 1)\]
    is the semistable reduction at $0$.
    Thus condition i) of Lemma~\ref{lem:stackextend} is satisfied.
        
    The contribution to the lefthand side of the Arakelov inequality is \[\frac{(7\cdot 2+5\cdot  6-4\cdot  16)-4}{4} = -6,\]
    where the term $-4$ can be understood as follows. We have $dx = dY$, and $y = \frac{Y^3-W}{s'}$, therefore comparing $\omega_0$ with $\frac{dx}{y}\wedge\frac{xdx}{y}\wedge \frac{x^2dx}{y}\wedge \frac{x^3dx}{y}$ then introduces an additional factor of $s'^{-4}$, arising from the division by $s'^{-1}$ in $y$.
\end{proof}

\begin{lem}\label{lem:red9_1}
    The semistable reduction of $C_{9,t}$ at $1$, obtained after a totally ramified extension of degree $7$, is the union of an elliptic curve isomorphic to $y^2 = x^3 - 1$ and the Picard curve $y^3=x(x^3+1)$.
    
    Moreover, the local contribution to the lefthand side of the Arakelov inequality is $-\frac{29}{9}$.
\end{lem}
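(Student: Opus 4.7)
\medskip
\noindent\textbf{Proof plan.} The strategy parallels the proof of \Cref{lem:red7_1}, but is complicated by the fact that $C_{9,t}$ is given as a canonical model in $\mathbb{P}^3$: setting $t=1$ directly in the defining cubic makes the factor $t(t-1)$ vanish and reduces it to $3W^3=0$, so the fiber is the triple conic $\{W=0\}\cap\{XZ=Y^2\}$, a highly non-reduced and non-nodal object. To recover a semistable model, I work in the affine chart $Z=1$, $X=Y^2$, with the polynomial $F_9(Y,W,t)\in\Q[Y,W,t]$ displayed before \Cref{lem:red9_0}, and expand $F_9(Y,W,1+s)=3W^3+s\,g_1(Y,W)+O(s^2)$, where a direct calculation yields the factored form
\begin{equation*}
  g_1(Y,W)=(Y+1)^4(7Y^2-6Y+3)+3W(Y+1)^2(5Y^2-4Y+3).
\end{equation*}
This identifies the singular locus of the first-order deformation along the line $W=0$, concentrated at the point $Y=-1$.

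To extract the two components of the semistable limit I use two different rescalings of $(Y,W,s)$, each of which resolves the equation in a chart of a blow-up/base-change of the family. For the \emph{first} component I choose weights $(a,b,m)$ with $W=\tau^a W'$, $s=\tau^m$ so that the leading piece is $3W'^3=-(Y+1)^4(7Y^2-6Y+3)$, and then absorb the $(Y+1)^4$ into $W'$ by the cube-root substitution $W'=(Y+1)V$, obtaining the smooth affine model $3V^3=-(Y+1)(7Y^2-6Y+3)$. This is a cyclic triple cover of $\mathbb{P}^1$ branched at three points, so by Riemann--Hurwitz $2g-2=3(-2)+2\cdot 3=0$ and $g=1$; the automorphism $V\mapsto\zeta_3 V$ forces $j=0$, and a rational change of variables identifies this with $y^2=x^3-1$. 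For the \emph{second} component I zoom in on $(Y,W)=(-1,0)$ via $Y=-1+\tau^a\Xi$, $W=\tau^b\mathcal{W}$, $s=\tau^m$, using the factored form of $g_1$ to choose $(a,b,m)$ so that after dividing the equation by the appropriate power of $\tau$ and specializing at $\tau=0$, what survives is a cubic in $\mathcal{W}$ whose constant term is a polynomial of degree $4$ in $\Xi$ with the right branch structure. A final tidy-up and rescaling presents this as the Picard curve $y^3=x(x^3+1)$. The whole computation is verified in the \Magma{} file \verb|239.m|.

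It then remains to check that the two components cross transversely at a single point (so that the dual graph is a $2$-chain and the arithmetic genus is $g_1+g_3=4$), and to compute the Arakelov contribution. Using the rational section
\begin{equation*}
  \omega_0=\frac{dY}{dF_{9,W}}\wedge\frac{W\,dY}{dF_{9,W}}\wedge\frac{Y\,dY}{dF_{9,W}}\wedge\frac{Y^2\,dY}{dF_{9,W}},
\end{equation*}
the rescaling rules recalled before \Cref{lem:red9_0} ($\omega_0$ gets multiplied by $\tau^{7a+5b}$ under $(Y,W)\mapsto(\tau^aY',\tau^bW')$ and by $\tau^{4k}$ after dividing $F_9$ by $\tau^k$) are applied in each chart, and the contributions are summed and divided by the degree of the ramified extension to yield $-\tfrac{29}{9}$.

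The main obstacle is step (b) of the above: the limit $3W^3+(Y+1)^4(7Y^2-6Y+3)=0$ possesses a non-nodal singularity at $(-1,0)$ that a single substitution does not resolve, so the correct triple of weights $(a,b,m)$ must simultaneously accommodate the mixed term $3W(Y+1)^2(5Y^2-4Y+3)$ in $g_1$ \emph{and} the lowest-order terms of the higher Taylor coefficients $g_2,g_3,\dots$ in order to produce the degree-$4$ polynomial $\Xi(\Xi^3+1)$ on the right-hand side. Finding these weights is essentially a Newton-polygon computation for $F_9$ viewed as a polynomial in $(Y+1,W,s)$, and the check that the outcome matches the announced Picard curve is the one genuinely non-formal point in the argument.
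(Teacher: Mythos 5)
Your strategy is the same as the paper's---expand $F_9$ around $t=1$ and perform successive weighted substitutions in $(Y+1,W,t-1)$ to uncover the two components---and the part you actually execute is correct: the factorization $g_1=(Y+1)^4(7Y^2-6Y+3)+3W(Y+1)^2(5Y^2-4Y+3)$ of $\partial F_9/\partial t|_{t=1}$ checks out, and your normalized model $3V^3=-(Y+1)(7Y^2-6Y+3)$ of the first component agrees (after $Y'=Y+1$) with the paper's plane model $W'^3+Y'^4(Y'^2-\tfrac{20}{7}Y'+\tfrac{16}{7})$, so the genus-one, $j=0$ identification is sound.

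There are, however, two genuine gaps. First, the Picard component is never actually produced. You assert that suitable weights $(a,b,m)$ exist and that the outcome ``matches the announced Picard curve,'' but you neither determine the weights nor verify the two facts on which the whole claim hinges: that $g_2(-1,0)=0$ (otherwise the constant term of $g_2$ enters at strictly lower order than $W^3$ and the degeneration is of a different shape), and that the coefficient of $(Y+1)$ in $g_2$ combines with the $16\Xi^4$ coming from $g_1$ to give $\Xi(\Xi^3+1)$ up to scaling. You flag this yourself as ``the one genuinely non-formal point,'' which is precisely an admission that the lemma's first assertion is not proved. The paper does it concretely: after the first substitution ($t-1=s^3$, $Y=Y'-1$, $W=sW'$, divide by $s^3$) it sets $s=v^3$, $Y'=v^3Y''$, $W'=v^4W''$, divides by $v^{12}$, and reads off $3W''^3+16Y''^4+16Y''$. (Note the composite extension is $t-1=v^9$, i.e.\ degree $9$, consistent with $r=9$ in condition ii) of \Cref{lem:stackextend} and with the denominators $9$ in the Arakelov sum; the ``degree $7$'' in the statement is a typo inherited from \Cref{lem:red7_1}.) Second, the value $-\tfrac{29}{9}$ is asserted rather than computed, and the bookkeeping is not the naive ``sum over charts'' you describe: the paper obtains $\tfrac{7\cdot 0+5\cdot 1-4\cdot 3}{3}=-\tfrac{7}{3}$ from the first substitution and then adds, for \emph{each} of the four basis differentials separately, the quantity $\min(0,\cdot)$ of its additional valuation in the second chart, yielding $-\tfrac{5}{9}-\tfrac{1}{9}-\tfrac{2}{9}+0=-\tfrac{8}{9}$. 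Nothing in your write-up reflects this differential-by-differential $\min$-over-components structure, and without it the stated total cannot be derived.
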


\begin{proof}
    To compute the semistable reduction at 1 we first move $1$ to 0 by 
    replacing $F_9$ by the equation $F_{9, 1} = F_9(t+1).$ Considering the equation $F'_{9,1} = s^{-3}F_{9,1}(s^3, Y' - 1, sW')$ and reducing modulo $s$ gives us the equation $Y'^4\cdot(Y'^2 - 20/7Y' + 16/7) + W'^3$,
    which is an elliptic curve isomorphic to $y^2 = x^3 - 1$. Now looking at $F''_{9,1} = v^{-12}F'_{9,1} (v^3,v^3Y'',v^4W'')$  and reducing modulo $s$, we obtain the equation
    $16Y'^4 + 16Y' + 3W'^3,$
    showing that the Picard curve given by  $y^3=x(x^3+1)$ is another component. 
    This computation shows that condition ii) of Lemma~\ref{lem:stackextend} is verified.
            
    The contribution to the lefthand side of the Arakelov inequality is
    \begin{align*}
       &\frac{7\cdot 0+5\cdot 1-4\cdot3}{3} + \min \left(0, \frac{3+4-12}{9}\right)+\min \left(0, \frac{3+2\cdot 4-12}{9}\right)\\ 
       &\qquad +\min \left(0, \frac{ 2\cdot 3+4-12}{9}\right)+\min \left(0, \frac{3\cdot 3+4-12}{9}\right)=-\frac{29}{9}
    \end{align*}
    
\end{proof}

\begin{lem}\label{lem:red9_inf}
    The semistable reduction of $C_{9,t}$ at $\infty$, obtained after a totally ramified extension of degree $3$, is the genus $4$ curve with plane model given by \[-2Y^6 + 15Y^4W - 14Y^3 + 6YW + 3W^3 - 3=0.\]
    
    Moreover, the local contribution to the lefthand side of the Arakelov inequality is $\frac{28}{3}$.
\end{lem}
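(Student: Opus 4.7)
The plan is to mirror the proof of \Cref{lem:red7_inf}. Substitute $t = s^3$, $Y = sY'$, $W = s^5 W'$ into $F_9$, divide by $s^{15}$, and specialize at $s = \infty$. Under this substitution each monomial $c\cdot t^a Y^b W^c$ of $F_9$ becomes $c\cdot s^{3a+b+5c} Y'^b W'^c$, and inspection of the expanded form of $F_9$ (as displayed in the proof of \Cref{lem:red9_0}) shows that the maximum value of $3a+b+5c$ is $15$, attained by exactly the six monomials $-3t^5$, $-2t^3 Y^6$, $15 t^2 W Y^4$, $-14 t^4 Y^3$, $6 t^3 W Y$, and $3 W^3$. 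After dividing by $s^{15}$ and letting $s \to \infty$, all other terms vanish, leaving
\begin{equation*}
-2Y'^6 + 15 Y'^4 W' - 14 Y'^3 + 6 Y' W' + 3 W'^3 - 3 = 0
\end{equation*}
as the equation of the reduced fiber.

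To verify condition iii) of \Cref{lem:stackextend}, one checks that the plane model above is birational to a smooth curve of genus $4$. A convenient way is to lift the substitution to $\mathbb{P}^3$ by also setting $X = s^2 X'$, $Z = Z'$: the quadric $XZ - Y^2$ is then preserved up to the factor $s^2$, and the defining cubic becomes, after dividing by $s^{15}$ and specializing, the cubic $-2X'^3 + 15 W' X'^2 - 14 X' Y' Z' + 6 W' Y' Z' - 3 Z'^3 + 3 W'^3 = 0$. The plane equation above is simply its restriction to the affine chart $Z' = 1$, $X' = Y'^2$. A routine Jacobian-ideal check then confirms that this complete intersection is smooth of genus $4$, hence a good reduction of the family after the degree-$3$ cover.

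Finally, applying the transformation rules for $\omega_0$ recalled earlier in the section with $i = 1$, $j = 5$, $k = 15$, and extension degree $e = 3$ gives the contribution at the fiber over $\infty$:
\begin{equation*}
-\frac{7i + 5j - 4k}{e} = -\frac{7 + 25 - 60}{3} = \frac{28}{3},
\end{equation*}
with the overall sign chosen as in \Cref{lem:red7_inf} for fibers over $\infty$. As a consistency check, summing this with the contributions $-6$ and $-\tfrac{29}{9}$ from \Cref{lem:red9_0} and \Cref{lem:red9_1} yields $\tfrac{1}{9}$, which matches $g\cdot\deg(\Omega^1_{\mathcal{X}(2,3,9)}(\log S))/2$ and thus realizes equality in the Arakelov bound required by \Cref{thm:VZ-criterion}. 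The only non-routine step is the smoothness verification, but this is a finite polynomial computation of the same flavor as those carried out in the preceding lemmas.
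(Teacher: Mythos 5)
Your proposal is correct and follows essentially the same route as the paper: the substitution $(t,Y,W)=(s^3, sY', s^5W')$, division by $s^{15}$, identification of the six monomials with $3a+b+5c=15$, and the Arakelov contribution $-\tfrac{7\cdot 1+5\cdot 5-4\cdot 15}{3}=\tfrac{28}{3}$ all match the paper's proof, with your smoothness check via the homogenized model in $\PP^3$ being a reasonable way to make explicit what the paper merely asserts. The only blemish is in your side consistency check, where the sum $\tfrac{1}{9}$ of the raw contributions equals $2\deg(e^*\omega_{A_9/\calX(2,3,9)})=g\deg(\Omega^1_{\calX(2,3,9)})$ (with $\deg(\Omega^1_{\calX(2,3,9)})=\tfrac{1}{36}$ including generic inertia), not $g\deg(\Omega^1)/2$; this does not affect the lemma itself.
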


\begin{proof}
    To compute the semistable reduction at infinity, we make the substitution $(t, Y, W) \mapsto (s^3, sY', s^5W')$, and after dividing by $s^{15}$, specializing $s$ to $\infty$ gives us the equation
    $$-2Y'^6 + 15Y'^4W' - 14Y'^3 + 6Y'W' + 3W'^3 - 3.$$
    Since this equation is the affine equation of a smooth genus $4$ curve, we have shown that $C_{9,t}$ has potentially good reduction after an extension of ramification degree $3$ verifying criterion iii) of Lemma \ref{lem:stackextend}.
    The contribution of this fiber to the Arakelov inequality is $-\frac{7\cdot 1+5\cdot  5-4\cdot 15}{3} = \frac{28}{3}$.    
\end{proof}

Having completed all computations we are now ready to show that our families can be extended to families of abelian varieties over $\calX(2,3,7)$ and $\calX(2,3,9)$ respectively, and that for the extended families the Arakelov inequality~\eqref{eq:Arakelov} holds with equality.
\begin{prop} \label{prop:Shimura}
    The Jacobians of the families $C_{7,t}, C_{9,t}$ (Equations \ref{eq:237}, \ref{eq:239}) over $\mathbb{P}^1\setminus \{0,1, \infty \}$ satisfy the criterion of Lemma \ref{lem:stackextend} with $(p,q,r)=(2,3,7)$ (resp. $(2,3,9)$).
    \par Furthermore, for the extended families over $\calX(2,3,7)$ and $\calX(2,3,9)$, we have equality in the Arakelov inequality (\ref{eq:Arakelov}). In particular, the families are Shimura families. 
\end{prop}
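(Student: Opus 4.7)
The plan is to combine the semistable reduction lemmas (Lemmas~\ref{lem:red7_0}--\ref{lem:red9_inf}) with the generalized Viehweg--Zuo criterion, \Cref{thm:VZ-criterion}. First, for each $n \in \{7, 9\}$ we verify that the hypotheses of \Cref{lem:stackextend} with $(p,q,r) = (2,3,n)$ hold. Lemmas~\ref{lem:red7_0}, \ref{lem:red7_1}, and~\ref{lem:red7_inf} show that the Jacobian of $C_{7,t}$ has potentially good reduction at $t = 0, 1, \infty$ after totally ramified extensions of degrees $4$, $7$, and $3$, respectively; moreover, at $t = 0$ the intermediate quadratic extension is realized via a quadratic twist, so the inertia acts by $[-1]$ as required in hypothesis~i). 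The analogous statements for $C_{9,t}$ follow from Lemmas~\ref{lem:red9_0}, \ref{lem:red9_1}, and~\ref{lem:red9_inf}. Applying \Cref{lem:stackextend} thus extends the Jacobian of $C_{n,t}$ to a family of principally polarized abelian fourfolds $\mathcal{A}_n \to \calX(2,3,n)$.

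Next we check equality in the Arakelov inequality~\eqref{eq:Arakelov} for each such family. Since $\mathcal{A}_n$ has good reduction everywhere on $\calX(2,3,n)$, the bad locus $S$ of \Cref{thm:VZ-criterion} is empty. The local contributions to $\deg(e^*\omega_{\mathcal{A}_n/\calX(2,3,n)})$ computed in the semistable reduction lemmas sum to
\[
-6 - \tfrac{9}{7} + \tfrac{22}{3} = \tfrac{1}{21} \quad\text{for } n=7, \qquad -6 - \tfrac{29}{9} + \tfrac{28}{3} = \tfrac{1}{9} \quad\text{for } n=9.
\]
For the right-hand side, observe that $\calX(2,3,n)$ is a $\mu_2$-gerbe over its rigidification, a smooth stacky curve with coarse space $\mathbb{P}^1$ and orbifold points of orders $2, 3, n$; since gerbes leave the cotangent sheaf and its degree invariant, the orbifold Riemann--Hurwitz formula yields
\[
\deg(\Omega^1_{\calX(2,3,n)}) = -2 + \left(1 - \tfrac{1}{2}\right) + \left(1 - \tfrac{1}{3}\right) + \left(1 - \tfrac{1}{n}\right),
\]
which equals $\tfrac{1}{42}$ for $n=7$ and $\tfrac{1}{18}$ for $n=9$. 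Multiplying by $g = 4$, we check that $2 \cdot \tfrac{1}{21} = 4 \cdot \tfrac{1}{42}$ and $2 \cdot \tfrac{1}{9} = 4 \cdot \tfrac{1}{18}$, so the Arakelov inequality is in fact an equality in both cases.

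Applying \Cref{thm:VZ-criterion} then shows that the image of $\calX(2,3,n)$ in $\Ac_4$ is a Shimura curve, completing the argument. The main conceptual subtlety we anticipate is the computation of $\deg(\Omega^1_{\calX(2,3,n)})$: the generic $\mu_2$-inertia must be tracked alongside the orbifold structure at the three elliptic points, and passing to the rigidification is what makes the standard orbifold Riemann--Hurwitz formula directly applicable. Once the degrees on both sides are in hand, the equality of the Arakelov inequality is a direct numerical verification, and Viehweg--Zuo delivers the rest.
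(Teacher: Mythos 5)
Your argument is essentially the paper's: verify \Cref{lem:stackextend} via Lemmas~\ref{lem:red7_0}--\ref{lem:red9_inf}, sum the local contributions to $\deg(e^*\omega)$, compare with $\deg\Omega^1$ of the stacky curve, and invoke \Cref{thm:VZ-criterion}. The one discrepancy is normalization: the paper computes both sides of \eqref{eq:Arakelov} with Vistoli's degree on the full stack $\calX(2,3,n)$, so the generic $\mu_2$-inertia halves both the left side (giving $\tfrac{1}{42}$, resp.\ $\tfrac{1}{18}$) and $\deg\Omega^1$ (giving $\tfrac{1}{84}$, resp.\ $\tfrac{1}{36}$); your claim that a gerbe leaves the degree invariant is false in that convention, but since you apply your convention consistently to both sides, the two verifications differ by an overall factor of $2$ and your equality check is still valid.
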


\begin{proof}
    We proved that Lemma \ref{lem:stackextend} is verified for both our families (see Lemmata \ref{lem:red7_0}, \ref{lem:red7_1}, and \ref{lem:red7_inf}, resp. \ref{lem:red9_0}, \ref{lem:red9_1}, and \ref{lem:red9_inf})).
    Therefore there exist families $A_n\rightarrow \calX(2,3,n), \, n=7,9$ of principally polarized abelian varieties obtained from descending $\Jac(C_{n,t})$. We will show that these abelian schemes satisfy the Arakelov inequality with equality. 
    Let us consider the family $C_{7,t}$. From the contributions of the different fibers, we compute both sides of the inequality and find
    $$\deg(e^*\omega_{{A_7}/\calX(2,3,7)})= \frac{1}{2}\left(-6-\frac{9}{7}+\frac{22}{3}\right)=\frac{1}{42}$$
    where the $\frac{1}{2}$ accounts for generic inertia.
    On the other hand $\deg(\Omega^1_{\calX(2,3,7)})=\frac{1}{84}$ by  \cite[Equation 5.5.10]{VZB}, showing that we have equality in Arakelov's inequality for $\calX(2,3,7)$ and that \Cref{eq:237} is a Shimura family.
    \bigskip
        
    Similarly, for the family $C_{9,t}$, we find \[\deg(e^*\omega_{{A_9}/\calX(2,3,9)})=\frac{1}{2}\left(-6-\frac{29}{9}+\frac{28}{3}\right) =\frac{1}{18},\]
    showing once again that equality holds in Arakelov's inequality for $\calX(2,3,9)$, and that \Cref{eq:239} is a Shimura family.
\end{proof}
Henceforth we use the notation $\mathcal{X}_7 := \mathcal{X}(2,3,7),\mathcal{X}_9 := \mathcal{X}(2,3,9)$. This differs slightly from the terminology in \Cref{thm:main_intro} as we now include generic inertia.
\subsection{The Shimura curves are of Mumford type}
Now that we know that $\calX_7$ and $\calX_9$ are Shimura curves, we want to show that they are of Mumford type. The strategy will be to use the fundamental group to determine the quaternion algebra defining the Shimura curve. The fundamental group is determined by a result of Behrend--Noohi \cite[Proposition 5.6]{BehrendNoohi}.  There are two surjections.
\begin{align*}
    \pi_1(\calX_7) &\twoheadrightarrow \langle \sigma_2, \sigma_3, \sigma_7 \mid \sigma_2^2 = \sigma_3^3 = \sigma_7^7 = 1 \rangle = \Delta(2,3,7)\\
    \pi_1(\calX_9) &\twoheadrightarrow \langle \sigma_2, \sigma_3, \sigma_9 \mid \sigma_2^2 = \sigma_3^3 = \sigma_9^9 = 1 \rangle = \Delta(2,3,9).
\end{align*}
with kernel $\Z/2$.
We are now ready to prove the main result, Theorem \ref{thm:main_intro}.
\begin{thm}\label{thm:main}
The Shimura families constructed in Proposition \ref{prop:Shimura} are of Mumford type.
\end{thm}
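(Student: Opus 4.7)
The plan is to identify the Shimura datum of $\calX_n$ for $n \in \{7,9\}$ using the fundamental-group surjections recalled just above, verify that the resulting quaternion algebra $B_n$ satisfies Mumford's hypotheses (\Cref{subsec:mumford_defn}), and then invoke Moonen's classification of Shimura curves in $\Ac_g$ \cite{MoonenCrvs} to conclude.

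By \Cref{prop:Shimura}, $\calX_n$ is a Shimura curve and is therefore uniformized by an arithmetic lattice $\Gamma$ in the derived group $G^{\mathrm{der}}$ of the generic Mumford--Tate group of the family. The surjection $\pi_1(\calX_n) \twoheadrightarrow \Delta(2,3,n)$ with kernel $\Z/2$ (the generic inertia) identifies $\Gamma/\{\pm 1\}$, up to commensurability, with the triangle group $\Delta(2,3,n)$. By the discussion in \Cref{subsec:triangle}, this triangle group is identified with $\Gamma(B_n, \calO_n) \subset \PSL_2(\R)$, where $B_n$ is the quaternion algebra over $K = \Q(\zeta_n)^+$ split at exactly one real place and ramified at the other two, and $\calO_n$ is its (essentially unique) maximal order. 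Consequently, $G^{\mathrm{der}}$ is isogenous as a $\Q$-algebraic group to $\Res_{K/\Q}(B_n^{\times,1})$.

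Next, one verifies that $B_n$ meets Mumford's requirements: $K = \Q(\zeta_n)^+$ is a totally real cubic field for $n = 7, 9$; $B_n$ splits at exactly one real place by construction; and $\Cor_{K/\Q}(B_n) \cong \Mat_8(\Q)$ by Riehm's theorem \cite{Riehm}, as recalled in \Cref{subsec:triangle}. Therefore Mumford's construction applied to $B_n$ produces a Shimura curve in $\Ac_4$ whose derived Mumford--Tate group is exactly $\Res_{K/\Q}(B_n^{\times,1})$, in agreement with ours.

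Finally, we appeal to Moonen's classification \cite{MoonenCrvs} of Shimura curves in $\Ac_g$. In dimension $g = 4$, his enumeration shows that the Shimura curve with derived Mumford--Tate group $\Res_{K/\Q}(B_n^{\times,1})$ for our $K$ and $B_n$ is precisely the Mumford-type curve, thereby yielding the theorem. The main obstacle is that the derived Mumford--Tate group alone does not a priori determine the Shimura family: in principle, a different Hodge cocharacter or a different faithful symplectic representation could yield another family with the same derived group. The one-dimensionality of the base forces the Hodge cocharacter into the unique $\SL_2(\R)$-factor of $G^{\mathrm{der}}(\R)$, and the corestriction condition $\Cor_{K/\Q}(B_n) \cong \Mat_8(\Q)$ singles out the $8$-dimensional $\Q$-representation used in Mumford's construction; Moonen's list then confirms that no PEL-type alternative survives, completing the identification.
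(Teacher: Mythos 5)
Your overall route coincides with the paper's: use the Behrend--Noohi description of $\pi_1(\calX_n)$ together with Takeuchi's rigidity result to pin down the quaternion algebra $B_n$ over $K=\Q(\zeta_n)^+$, check Mumford's corestriction condition, and then invoke Moonen's classification. However, there is a genuine gap at the decisive last step. You correctly flag the obstacle yourself---that the adjoint (equivalently, derived) datum does not by itself determine the Shimura curve in $\Ac_4$---but you then dispose of it by assertion (``Moonen's list then confirms that no PEL-type alternative survives''). Moonen's classification is not organized as a lookup table indexed by the adjoint datum; to apply it one must analyze the simple isogeny factors of the generic fiber $(A,\lambda)$ and their Albert types, and that case analysis is the actual mathematical content of the proof.

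Concretely, the missing argument is the following. Suppose the generic fiber $A$ had a simple non-CM factor $A_0$ of Albert type IV, i.e., one for which the center $F$ of $\End^0(A_0)$ is a CM field. By \cite[Proposition 4.12]{MoonenCrvs} there is a subfield $k_I\neq\Q$ of the Galois closure of $K$ that embeds into $F$; since $K$ is cyclic of degree $3$ over $\Q$, necessarily $k_I=K$, so $F$ is a CM field containing the totally real cubic field $K$ and hence $[F:\Q]\geq 6$. As $A_0$ is not of CM type this forces $\dim A_0\geq 6$, contradicting $\dim A=4$. Therefore some simple factor $A_0$ is not of type IV; then \cite[Proposition 4.8]{MoonenCrvs} places it in case (I), giving $\dim A_0 = 2^{[K:\Q]-1}=4$, so $A\sim A_0$ is simple and the curve is of Mumford type. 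Without this exclusion of the type IV alternatives, your final sentence is a restatement of the theorem rather than a proof of it.
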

\begin{proof}
Let $\mathcal{X}_n, \, n\in \{7,9\}$ be either $\mathcal{X}_7$ or $\mathcal{X}_9$. Then, as was already noted in the proof of Theorem \ref{thm:VZ-criterion}, the existence of a non-isotrivial family of principally polarized abelian varieties over $\calX_n$ implies that it is hyperbolic. Furthermore, by \cite[Proposition 7.5]{BehrendNoohi}, $\mathcal{X}_n^\text{an} \cong [\Gamma \backslash \H]$ where $\Gamma=\pi_1(\mathcal{X}_n)$ acts on $\H$ through a group homomorphism $\rho:\Gamma \rightarrow \PSL_2(\mathbb{R})$ with kernel $\Z/2$ and image $\Delta(2,3,n)$. The map $\rho$ is unique up to conjugation. Since $\mathcal{X}_n$ is a Shimura curve, there exists a quaternion algebra $B$ over a totally real field $K$ with the following properties: \begin{itemize}
    \item $B$ splits at exactly one real place of $K$, say $v$; and 
    \item for some (equivalently, any) maximal order $\mathcal{O}\subset B$, $\rho(\Gamma)$ is commensurable with $\Gamma(B, \calO)$ (see \Cref{subsec:triangle}).
\end{itemize}
Furthermore, \cite[Proposition 5]{TakeuchiFuchs} shows that $B$ is uniquely determined from $\rho(\Gamma)$. Therefore, the quaternion algebra $B$ is the one described in \Cref{subsec:triangle}; recall that its center $K=\Q(\zeta_n)^+$ has degree $3$.
\par We now show that $\calX_n$ is of Mumford type using the classification of Shimura curves in $\mathcal{A}_g$ due to Moonen \cite{MoonenCrvs}. Let $\Sh \subseteq \mathcal{A}_g$ be a Shimura curve and $(A, \lambda)$ be the principally polarized abelian variety over $\mathbb{C}$ corresponding to a non-CM point of $\Sh$. Assume first that $A$ is simple (for the general case, see \cite[Section 6.1]{MoonenCrvs}). Then, Moonen describes a dichotomy between Shimura curves for which $\End^0(A)=\End(A)\otimes_\mathbb{Z} \mathbb{Q}$ is of Albert type IV (i.e., the center is a CM field), and those for which it is not. The latter are non-PEL when $g\geqslant 4$ and, similar to Mumford's construction, are all obtained via the corestriction. Albert type IV Shimura curves can be PEL or non-PEL. Their construction is not based on the corestriction and thus is not a generalization of Mumford's construction.
\par Returning to the Shimura curve $\mathcal{X}_n$, let $(A, \lambda)$ be the principally polarized abelian variety corresponding to a non-CM point of $\mathcal{X}_n$. Suppose, for contradiction, that $A$ has a simple non-CM factor $A_0$ of Albert type IV. Let $F$ denote the center of $\End^0(A_0)$. By \cite[Proposition 4.12]{MoonenCrvs} there is a subfield $k_I\neq \mathbb{Q}$ of the Galois closure of $K$ such that $k_I$ embeds into $F$. In our case $K$ is cyclic of degree $3$  over $\mathbb{Q}$ and thus $k_I=K$. Since $K$ is totally real and $F$ is a CM-field, we have $[F:\mathbb{Q}]\geqslant 6$. However, since $A_0$ does not have CM, this implies $\dim(A_0)\geqslant 6$, contradicting the fact that $\dim(A)=4$.
\par Therefore, $A$ must have at least one simple factor $A_0$ not of Albert type IV. Then \cite[Proposition 4.8]{MoonenCrvs} applies and we are in case (I). Consequently ${\dim(A_0)= 2^{[K:\mathbb{Q}]-1}=4}$ and therefore $A\sim A_0$. In particular, $A$ is simple and $\mathcal{X}_n$ is a Mumford-type Shimura curve because of the conclusion (2) of loc. cit. This proves the theorem.    
\end{proof} 
\begin{rem}
    Alternatively, one could use \cite[Proposition 0.5]{VZ} to prove the previous theorem.    
\end{rem}

\begin{rem}\label{rem:LuZuoError}
Theorem \ref{thm:main} contradicts the claim of \cite[Theorem 1.2]{LuTanZuo} asserting the non-existence of (generalized) Mumford type Shimura curves in the non-hyperelliptic locus for $g\geqslant 4$. Their proof is based on slope inequalities for families of semistable curves. It appears that there are issues with some of the coefficients in these inequalities. For instance, in the earlier paper \cite[Theorem 5.2]{LuZuo} they consider a family $h: S\rightarrow B$ of semistable curves over a smooth curve $B$ which is obtained from a curve $C$ in the Torelli locus, so that $\pi: B\rightarrow C$ is $2:1$ ramified at some set of hyperelliptic points $\Lambda$. They claim that, under the assumption that $h_* \omega_{S/B}$ is slope semistable, one has the inequality
\begin{equation}\label{eq:LuZuo}
    \omega_{S/B}^2\geqslant \frac{5g-6}{g} \deg h_*\omega_{S/B}+2(g-2) |\Lambda| \, .
\end{equation}
(The full inequality in \cite[Theorem 5.2]{LuZuo} contains additional terms on the righthand side, but these are all positive.)
The $2$ in front of $(g-2)$ is based on the following fallacious argument: they consider the multiplication map $\rho: S^2 h_* \omega_{S/B}\rightarrow h_*\omega_{S/B}^{\otimes2}$ and claim that for points in $x \in \Lambda$, the length of the cokernel satisfies
$$
{\operatorname{length}_x \coker (\rho) \geqslant 2 \dim(\coker \rho|_{x})}
$$
because the vector bundle $h_* \omega_{S/B}$ is pulled back from $C$ (see \cite[Lemma 5.5]{LuZuo}). This is not correct, as one would have to assume that the map $\rho$ is pulled back from $C$, which is not always the case.
\par Indeed as we will show now, our family \Cref{eq:239} gives an explicit counterexample to the inequality \ref{eq:LuZuo}. Choose an \'etale map $\varphi:C\rightarrow \calX_9$ where $C$ is a curve. Denote $d=\deg(\varphi)$. Let $S\rightarrow B$ be the family of semistable curves obtained from \Cref{eq:239} where $B\rightarrow C$ is as above. Then by the proof of \Cref{prop:Shimura}
$$\deg h_*\omega_{S/B}=\frac{2d}{18},$$
Furthermore $|\Lambda|=\frac{d}{4}$ and $\omega_{S/B}^2$ must satisfy Noether's inequality\\
${\omega_{S/B}^2\leqslant 12 \deg(h_* \omega_{S/B})}$ \cite[Equation 2.3]{LuZuo}. Therefore inequality (\ref{eq:LuZuo}) would yield
$$
\frac{24d}{18}\geqslant \frac{14\cdot 2 d}{4 \cdot 18}+d = \frac{25d}{18}
$$
which is absurd. On the other hand the vector bundle $h_*\omega_{S/B}$ is slope semistable by \cite[p. 72]{LuZuo}.
\par Despite these issues the arguments of \cite{LuZuo} still show that there are no (generalized) Mumford type Shimura curves generically contained in the Torelli locus for $g \gg 0$. It would be interesting to find the optimal lower bound on $g$.
\end{rem}

\begin{rem}
    In \cite[Question 6.2]{MoonenOort}, the authors ask if one can construct a curve $C$ of genus at least $4$ such that $\Aut(C) = \{\id\}$ and such that $\Jac(C)$ is an abelian variety of CM type. Since $C_{9,t}$ is not hyperelliptic and Mumford-type abelian varieties have endomorphism algebra $\Z$, the generic automorphism group of a curve in the family $C_{9,t}$ is trivial. As there are infinitely many CM points on the Shimura curve $\calX_9$, this gives an affirmative, although non-explicit, answer to their question. 
    
    All the entries of \Cref{table:239} are heuristic examples of such curves. Since the equations were found numerically, it is possible---although unlikely, given the high precision of our computations---that the curves might in fact not have CM. An invariant theoretic argument confirmed that all these curves have trivial automorphism group.
\end{rem}

\section{Further questions} \label{sec:future}
In this section we present some related questions and potential directions for future research.

\subsection{Cycles}

As before, let $A$ be an abelian fourfold of Mumford type. Let $K$ be the associated totally real cubic field and $B$ be the $K$-quaternion algebra. A special feature of abelian varieties of Mumford type is the existence of exceptional Hodge classes not arising from endomorphisms of $A$. More precisely, there are two Hodge classes $\delta_1, \delta_2\in H^4(A^2, \Q)$ that cannot be written as linear combinations of products of divisor classes; see \cite[(5.9)]{Moonen_MumTate}. Furthermore, products of $\delta_1, \delta_2$, and divisor classes generate the Hodge classes in $H^*(A^2, \mathbb{Q})$.

As noted by Galluzzi \cite{Galluzzi-KugaSatake}, while Kuga \cite{Kuga} proved the Hodge conjecture for abelian varieties of Mumford-type, the Hodge conjecture for products of such varieties remains open. (See also \cite{MoonenZarhin}, where Moonen and Zarhin prove the Hodge conjecture for abelian varieties of dimension at most $5$.) Thus it is unknown whether the Hodge classes mentioned above are algebraic. The explicit equations for the families of Mumford type given in \Cref{thm:main_intro} invite the possibility of verifying the Hodge conjecture for these families by attempting to find explicit algebraic cycles realizing these Hodge classes.

When $A$ is the Jacobian of a curve $C$, then the exceptional Hodge classes $\delta_1, \delta_2$ can be realized in $H^1(C, \QQ)^{\otimes4}\subset H^4(C^4, \mathbb{Q})$. Therefore, one possibility for proving the algebraicity of $\delta_1, \delta_2$ for a fiber $C$ of one of the families from \Cref{eq:237,eq:239} is finding explicit surfaces in $C^4$ realizing these classes. One might also hope to find equations that work generically, realizing these classes for the entire family.



\subsection{K3 Surfaces}\label{subsec:K3}
Another possible strategy to establish the algebraicity of the Hodge classes $\delta_1, \delta_2$ uses K3 surfaces and the Kuga--Satake construction.
Central to the connection between K3 surfaces and abelian fourfolds of Mumford type are Hodge structures of K3 type. 
Recall that a Hodge structure of K3 type is a polarized rational Hodge structure $W$ of weight 2 such that $\dim_{\CC}V^{2,0} =1$. Naturally, the second cohomology group $H^2\hspace{-0.09em}(S, \ZZ)$ for any any K3 surface $S$ satisfies this property. 

Given a polarized Hodge structure of K3 type $W$, the Kuga--Satake construction produces a polarized Hodge structure of weight 1, i.e., a polarized abelian variety $A$ with the property that there is an inclusion of polarized Hodge structures
\begin{equation}\label{eq: Kuga--Satake}
W \hookrightarrow H^1\hspace{-0.1em}(A, \QQ) \otimes_{\Q}H^1\hspace{-0.1em}(A, \QQ)\,.
\end{equation}
We write $\KS(W)$ for this abelian variety $A$. The functor $\KS(\cdot)$ is essentially injective.

In \cite[Lemma 4.5]{Galluzzi-KugaSatake}, Galluzzi shows that with $\Lambda \colonequals H^1\hspace{-0.09em}(A,\Z)$ there is a sub-Hodge structure $W$ of $S^2 \Lambda$ of $K3$ type with Hodge numbers (1,7,1). The associated K3 surface of Picard rank $13$ has RM as $W$ has the following properties:
\begin{enumerate}
\item $K \subseteq \End_{\mathbb{Q}-\text{HS}}(W_{\Q})$; \cite[Proposition 4.7]{Galluzzi-KugaSatake}; and
\item there is a symmetric $K$-bilinear form $b$ on $W_{\Q}$ giving a polarization on $W$. The pair $(W_{\Q}, b)$ is isomorphic to the reduced trace pairing  of $B$, i.e.,
$(\{x\in B \mid \trd(x)=0\}, \quad (x, y) \mapsto \trd(x\cdot y))$
\end{enumerate}

Galluzzi further shows that $\KS(W)$ is isogenous to $X^{32}$. Applying this construction to our families $\calX_7$ and $\calX_9$ would give us two families of K3 surfaces with RM. It would be interesting to describe these families explicitly.

Suppose we knew the family of K3-surfaces and the algebraicity of the Hodge classes giving rise to the RM and the Kuga--Satake construction, i.e., the inclusion in \Cref{eq: Kuga--Satake}; then this would give another way of establishing the Hodge conjecture for $\Jac(C)^2$, where $C$ denotes any fiber of one of our families.

Examples of families of K3 surfaces with RM are described by van Geemen and Schütt in \cite{vanGeemenSchuett}. The families in \cite[5.3]{vanGeemenSchuett} $(n=7)$ (resp. 5.5, $n=9$) with RM by $\Q(\zeta_7)^+$ (resp. $\Q(\zeta_9)^+$) are particularly interesting candidates for Galluzzi's construction.  The Picard ranks $\rho$ of the van Geemen--Schütt families are $\rho = 4$ for $n=7$, and $\rho = 10$ for $n=9$. One might hope to find suitable subfamilies with Picard rank $13$ that realize Galuzzi's construction. 
 Also note that they describe a family in \cite[5.7]{vanGeemenSchuett} with $n=11$ that might be related to the $\mathcal{X}_{11}$ family of abelian varieties corresponding to the $(2,3,11)$ triangle group mentioned in Remark \ref{rem:2311}.

Another possible starting point for finding Galluzzi's family of K3 surfaces is a construction of Paranjape \cite{Paranjape}. He studies K3 surfaces $S$ which are the desingularization of a double cover of the plane branched along six lines. The Kuga--Satake variety associated to the second cohomology group of $S$ is shown to be isogenous to $(\Prym \pi)^4$ where $\pi:C\to E$ is a cyclic four-to-one cover between a genus 5 curve $C$ and an elliptic curve $E$. Paranjape gives a recipe to write down the K3 surface when such a cover is given. We can apply this construction to the fiber at $0$ of our families \ref{eq:237}, \ref{eq:239} which are curves with automorphism group isomorphic to $\mathbb{Z}/4$. This is the degenerate case of Paranjape's construction, where $E$ degenerates to a rational curve. It would be interesting to see if Paranjape's K3 surface can be deformed explicitly in order to realize Galluzzi's family.

\subsection{Hypergeometric functions}\label{subsec:hypgeom}

The connection between hypergeometric functions and triangle groups is classical. As the Shimura curves $\calX_7$ and $\calX_9$ parametrizing the families \ref{eq:237} and \ref{eq:239} are defined using triangle groups, we expect that the periods of the families can be related to hypergeometric functions. We begin by recalling the definition of hypergeometric functions.

Let $U$ be the unit ball $\{t\in\C : |t|< 1\}$. The hypergeometric function $F(a,b;c;t): U \to \C$ is given by the hypergeometric series
\[ \sum_{n=0}^\infty \frac{(a)_n(b)_n}{(c)_n}\frac{t^n}{n!}\]
where $(m)_n = \prod_{i=0}^n(m+i)$. In the case where $b$ and $c-b$ are both non-negative the hypergeometric function can also be given using the integral representation
\[
    F(a,b;c;t) = \frac{\Gamma(b)\Gamma(c-b)}{\Gamma(c)} \int_1^\infty x^{a-c} (x-1)^{c-b-1} (x-t)^{-a} \, dx \, .
\]
Assume $a, b,$ and $c$ are rational and write
\[\mu_1 := c-a = \frac{n_1}{d},\hspace{1em} \mu_2 := 1+ b -c = \frac{n_2}{d},\hspace{1em} \mu_3 := a = \frac{n_3}{d}\]
as reduced fractions. Since
\[
    x^{a-c} (x-1)^{c-b-1} (x-t)^{-a}=x^{-\mu_1}(x-1)^{-\mu_2}(x-t)^{-\mu_3},
\]
then the integral is a period integral of the superelliptic curve given by 
\[y^d = x^{n_1} (x-1)^{n_2} (x-t)^{n_3}.\]
 There is a classical result of Hermite, Pochhammer, Schl\"afli, and Schwarz which has the following modern interpretation \cite{DeligneMostow}: for certain values of $a,b,c$ the cohomology of this family of supererelliptic curves has a piece with monodromy $\Delta(p,q,r)$. The corresponding values of $\mu_1, \mu_2, \mu_3$ can be found in Deligne-Mostow \cite[14.3.1]{DeligneMostow}. For $(p,q,r) = (2,3,7)$  we get 
\begin{align*}
    \mu_1 &= \frac{1}{2}\left(1-\frac{1}{p}-\frac{1}{q}+\frac{1}{r}\right)= \frac{13}{84},\\ 
    \mu_2 &= \frac{1}{2}\left(1-\frac{1}{p}+\frac{1}{q}-\frac{1}{r}\right) =  \frac{29}{84},\\
    \mu_3 &= \frac{1}{2}\left(1+\frac{1}{p}-\frac{1}{q}-\frac{1}{r}\right) = \frac{43}{84},
\end{align*}
corresponding to the curve
$$
    y^{84}=x^{13} (x-1)^{29}(x-t)^{43} \, .
$$
For this equation the elliptic points of order $p,q,r$ are at $0, 1, \infty$, respectively. However, for our families \ref{eq:237}, \ref{eq:239} the elliptic points are ordered $0, \infty, 1$, and hence we consider the curve 
$$X_{7,t}: y^{84}=x^{13} (x-1)^{43}(x-t)^{29}\,.$$
instead.
Similarly, setting $(p,q,r) = (2,3,9)$ gives us
\[
\mu_1 = 5/36, \quad \mu_2 = 13/36, \quad \mu_3 = 19/36
\]
and the curve
\[X_{9,t}: y^{36}=x^{5} (x-1)^{19}(x-t)^{13}.\]

It is natural to expect a connection between the periods of $X_{n,t}$ and $C_{n,t}$ for $n=7,9$. Before we formulate such a conjectural relationship, we recall an important idea of Shimura \cite{ShimuraCanonical} (see also \cite[§6]{Deligne}). Let $K$ and $B$ be as in \Cref{subsec:mumford_defn}, and $F$ be a CM extension of $K$ splitting $B$. Shimura observed that two types of Shimura curves become isomorphic over $\C$: the Mumford-type Shimura curves associated to $B$ and the PEL Shimura curves parametrizing polarized $6$-dimensional abelian varieties with a certain action of $F$. In the language of Deligne \cite{Deligne} this is equivalent to saying that the two Shimura data have the same adjoint Shimura datum.
\par In Shimura's construction, there is no unique choice for $F$. However, in our setting only one $F$ will provide the link to hypergeometric functions. We explain how to find it for the $(2,3,7)$ family; the $(2,3,9)$ case is analogous. Consider the family
$$X_{7,t}: y^{84}=x^{13} (x-1)^{43}(x-t)^{29}\,.$$
Define the Prym variety $\Prym(X_{7,t})$ as the quotient of $\Jac(X_{7,t})$ by the Jacobians of the subcovers of the projection $X_{7,t} \to \PP^1,\, (x,y)\mapsto x$. Then $\Prym(X_{7,t})$ is a $24$-dimensional (non-principally) polarized abelian variety with an action of $\Q(\zeta_{84})$. Furthermore, Cohen and Wolfart \cite[p.~100]{CohenWolfart} proved that $\Prym(X_{7,t})$ is isogenous to $A_{7,t}^4$ where $A_{7,t}$ is a $6$-dimensional abelian variety having an action of a degree $6$ CM-field $F_7\subset \Q(\zeta_{84})$. To find $F_7$ we inspect the action of $\Q(\zeta_{84})$ on the differential forms of $\Prym(X_{7,t})$. The eigenspaces have dimensions
\begin{table}[h!]
\centering
    \begin{tabular}{c||c|c|c|c|c|c|c|c|c|c|c|c}
          $i$ & 1 & 5 & 11 & 13 & 17 & 19 & 23 & 25 & 29 & 31 & 37 & 41\\
         \hline
         $d_i$ & 1& 2& 2& 1& 2& 2& 2& 2& 1& 2& 2& 1 \\
         \multicolumn{11}{c}{\vspace{0.1cm}}\\
            $i$ & 43 & 47 & 53 & 55 & 59 & 61 & 65 & 67 & 71 & 73  & 79 & 83\\
         \hline
         $d_i$ & 1& 0& 0& 1& 0& 0& 0& 0& 1& 0& 0& 1    
    \end{tabular}
\end{table}
where the first row describes a character $\chi_i: \zeta_{84} \mapsto \zeta_{84}^i$ and the second row gives $d_i \colonequals \dim H^0\hspace{-0.09em}(\Prym(X_{7,t}), \Omega^1)^{\chi_i}$. Note that the dimensions satisfy $d_{84-i} = 2 - d_i$ due to Hodge duality. 

The numbers $d_i$ have the property $d_i=d_{41\, i}= d_{55\, i}$. This implies that $F_7$ is the fixed field of the field automorphisms $\sigma_1, \sigma_2: \Q(\zeta_{84}) \rightarrow \Q(\zeta_{84}),\, \sigma_1(\zeta_{84})=\zeta_{84}^{41},\, \sigma_2(\zeta_{84})=\zeta_{84}^{55}$, and thus ${F_7=\Q(\zeta_7)^+(\sqrt{-21})}$. Motivated by the above observation of Shimura, we propose the following conjecture:
\begin{conj} \label{conj:HGM237}
Let $E_7$ be an elliptic curve with CM by $\QQ(\sqrt{-21})$. Then for any fiber
$$X_{7,t}: y^{84}=x^{13} (x-1)^{43}(x-t)^{29}$$
of the hypergeometric family, the Hodge structure
\[\left(H^1\hspace{-0.1em}(X_{7,t}, \Q)^{\otimes 3} \otimes_{\mathbb{Q}} H^1\hspace{-0.1em}(E_7, \Q)^{\otimes 2}\right)\!(2)\]
has a summand isomorphic to $H^1\hspace{-0.1em}(C_{7,t}, \Q)$.
\end{conj}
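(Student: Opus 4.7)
The approach rests on Shimura's observation that the Mumford Shimura datum $(G, X)$ defining $\calX_7$ and the PEL Shimura datum for six-dimensional abelian varieties with an action of $F_7 = \Q(\zeta_7)^+(\sqrt{-21})$ share the same adjoint Shimura datum, so that the families $\Jac(C_{7,t})$ and $A_{7,t}$ are two different families of polarized abelian varieties over commensurable Shimura curves. The plan is to combine this coincidence with the Cohen--Wolfart isogeny $\Prym(X_{7,t}) \sim A_{7,t}^{4}$ in order to exhibit an explicit motivic construction of $H^{1}(C_{7,t}, \Q)$ out of the cohomology of $X_{7,t}$ and $E_{7}$.

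First I would isolate, inside $H^{1}(X_{7,t}, \Q)$, the summand isogenous to $H^{1}(A_{7,t}, \Q)$; this amounts to selecting the appropriate characters in the $\mu_{84}$-decomposition of the Prym tabulated in the paper. Next I would mimic the corestriction that underlies Mumford's construction: using the cubic Galois action $\Gal(K/\Q) \cong \Z/3$ on the three archimedean places of $K$, one can form a threefold tensor product of Galois-conjugate copies of $H^{1}(A_{7,t})$ and extract a distinguished $8$-dimensional Galois-invariant summand, which should naturally carry a $\Q$-Hodge structure of weight $3$. Tensoring with $H^{1}(E_{7}, \Q)^{\otimes 2}$ and Tate-twisting by $2$ then adjusts the weight down to $1$, and a dimension count matches the Hodge numbers $(4,4)$ of $H^{1}(C_{7,t}, \Q)$.

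The main obstacle will be matching the Mumford--Tate groups $\Q$-rationally. On the $\R$-level both constructions produce essentially isomorphic polarized Hodge structures almost by definition, since they arise from representations of $G_{\R}$; promoting this to an isomorphism of $\Q$-Hodge structures requires resolving a quadratic-twist ambiguity, which I expect is precisely the role played by the CM elliptic curve $E_{7}$. Concretely, the field $\Q(\sqrt{-21})$ should encode the Brauer-theoretic input needed to turn the identification $\Cor_{K/\Q}(B) \cong \Mat_{8}(\Q)$ into an actual isomorphism of $\Q$-rational representations. A clean way to carry out the argument would be to verify the isomorphism at one CM fiber---for instance the semistable fiber at $t=0$ computed in Lemma \ref{lem:red7_0}, whose endomorphism algebra is determined in Corollary \ref{cor:endos}---and then propagate it to the whole family using rigidity of polarizable variations of $\Q$-Hodge structures on the hyperbolic Shimura curve $\calX_{7}$.
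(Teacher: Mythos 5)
This statement is labelled as a \emph{conjecture} in the paper, and the paper offers no proof of it: Section 4.3 only presents the motivation (Shimura's observation that the Mumford datum for $B$ and the PEL datum for $F_7=\Q(\zeta_7)^+(\sqrt{-21})$ share the same adjoint Shimura datum, the Cohen--Wolfart isogeny $\Prym(X_{7,t})\sim A_{7,t}^4$, and the character table used to pin down $F_7$) and then explicitly proposes the statement as a conjecture. Your proposal reproduces essentially this same motivation, so as a heuristic it is aligned with the paper; but as a proof it has genuine gaps, and you should not present it as more than a strategy.

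The central gap is the step where you "extract a distinguished $8$-dimensional Galois-invariant summand" from a threefold tensor product of conjugates of $H^1(A_{7,t})$ and assert it "should naturally carry a $\Q$-Hodge structure of weight $3$." The tensor-induced piece is a priori only defined after extending scalars to the Galois closure of $F_7$; descending it to a $\Q$-rational direct summand is obstructed by a Brauer class, and the whole content of the conjecture is that tensoring with $H^1(E_7,\Q)^{\otimes 2}$ and Tate-twisting kills exactly this obstruction and yields $H^1(C_{7,t},\Q)$ on the nose (not merely a form of it). You acknowledge this "quadratic-twist ambiguity" but give no argument resolving it; a dimension and weight count cannot distinguish $H^1(C_{7,t},\Q)$ from a nontrivial twist with the same Hodge numbers. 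Your proposed remedy --- verify the isomorphism at the CM fiber $t=0$ and propagate by rigidity --- also does not work as stated: rigidity of polarizable variations of Hodge structure lets you compare two variations once you know their underlying $\Q$-local systems are isomorphic (e.g.\ via isomorphic monodromy representations), but agreement of the fibers at a single point gives no control over the monodromy, so nothing propagates. To turn this outline into a proof you would need either a $\Q$-rational comparison of the two monodromy representations on $\calX_7$ (which is precisely the Brauer-theoretic input you defer), or an explicit algebraic correspondence between $C_{7,t}\times E_7^2$ and $X_{7,t}^3$ --- which the paper points out would itself be an instance of the Hodge conjecture.
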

Similar considerations for the $(2,3,9)$ family yield $F_9=\Q(\zeta_9)^+(i)$ and lead us to the following conjecture.
\begin{conj} \label{conj:HGM239}
Let $E_9$ be an elliptic curve with CM by $\QQ(i)$. Then for any fiber
$$X_{9,t}: y^{36}=x^{5} (x-1)^{19}(x-t)^{13}$$
of the hypergeometric family, the Hodge structure
\[\left(H^1\hspace{-0.1em}(X_{9,t}, \Q)^{\otimes 3} \otimes_{\mathbb{Q}} H^1\hspace{-0.1em}(E_9, \Q)^{\otimes 2}\right)\!(2)\] has a summand isomorphic to $H^1\hspace{-0.1em}(C_{9,t}, \Q)$.
\end{conj}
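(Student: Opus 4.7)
The plan is to prove the conjecture by matching the generic Mumford--Tate groups on both sides and exploiting Shimura's observation, recalled in \Cref{subsec:hypgeom}, that the Mumford-type Shimura datum for $\calX_9$ and the PEL datum extracted from the hypergeometric family have the same adjoint Shimura datum. Concretely, by Cohen--Wolfart, $\Prym(X_{9,t})$ is isogenous to $A_{9,t}^{4}$ for a $6$-dimensional abelian variety $A_{9,t}$ with action of $F_9=\Q(\zeta_9)^+(i)$, and $A_{9,t}$ is parameterized by the PEL Shimura curve sharing its adjoint datum with $\calX_9$. The first step would be to isolate $A_{9,t}$ inside $\Prym(X_{9,t}) \subset \Jac(X_{9,t})$ using the explicit character decomposition of the $\Q(\zeta_{36})$-action analogous to the one displayed for $(2,3,7)$ in \Cref{subsec:hypgeom}, thereby realizing $H^1(A_{9,t},\Q)$ as a canonical summand of $H^1(X_{9,t},\Q)$ and upgrading the identification to one of variations of $\Q$-Hodge structure over $\calX_9^{\text{an}}$.

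Next, I would translate the desired inclusion into a statement about representations of the common adjoint group. The Mumford representation $V_{\mathrm{Mum}}=H^1(\Jac C_{9,t},\Q)$ of the Mumford group $G$ arises from the corestriction construction applied to the quaternion algebra $B_9$ over $K=\Q(\zeta_9)^+$, while $V_{\mathrm{PEL}}=H^1(A_{9,t},\Q)$ is the tautological PEL representation on a module for $B_9 \otimes_K F_9$. Cubing $V_{\mathrm{PEL}}$ and cutting out the component on which $K^{\otimes 3}$ acts through the embeddings defining $\Cor_{K/\Q}$ should produce a representation isomorphic to $V_{\mathrm{Mum}}$ twisted by a one-dimensional character of $\Q(i)$---precisely the character one expects to realize via $H^1(E_9,\Q)^{\otimes 2}(2)$ for a CM elliptic curve $E_9$ with CM by $\Q(i)$. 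Matching Hodge numbers using the description of $h$ in \Cref{subsec:mumford_defn} would confirm that the resulting sub-object is a Hodge substructure of the correct type and weight, giving the desired summand.

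The main obstacle I foresee is descending from an isomorphism that exists over $\overline{\Q}$ (via Mumford--Tate representation theory) to one that respects the $\Q$-structure and is compatible with polarizations and global monodromy. A priori, matching Mumford--Tate groups yields only an abstract isomorphism of irreducible $\overline{\Q}$-Hodge structures, whereas the conjecture asserts an inclusion of $\Q$-Hodge structures globally over $\calX_9$. Settling this requires a careful Galois-cohomological comparison of the CM types of $X_{9,t}$, $E_9$, and $C_{9,t}$, together with a rigidity argument ensuring the inclusion extends across the base. A complementary, more concrete strategy would be to verify the conjecture first at the distinguished fibers $t \in \{0,1,\infty\}$, whose endomorphism algebras are computed in \Cref{cor:endos}: at each of these CM points the Hodge conjecture is known by Moonen--Zarhin, so one may hope to exhibit explicit algebraic correspondences and then use rigidity on the one-dimensional Shimura base to propagate the identification to all $t$.
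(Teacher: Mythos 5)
First, a point of order: the paper does not prove \Cref{conj:HGM239} --- it is stated as a conjecture, supported only by the heuristic you also invoke (Shimura's observation that the Mumford-type datum for $\calX_9$ and the PEL datum attached to the $6$-dimensional factor $A_{9,t}$ of $\Prym(X_{9,t})$ share an adjoint Shimura datum, plus the Cohen--Wolfart decomposition and the character computation determining $F_9=\Q(\zeta_9)^+(i)$). Your proposal is therefore not being measured against an existing argument; it is a strategy for an open statement, and as written it does not close the gap. The central unverified step is the representation-theoretic claim in your second paragraph: that cutting $V_{\mathrm{PEL}}^{\otimes 3}$ down along the embeddings defining $\Cor_{K/\Q}$ yields $V_{\mathrm{Mum}}$ twisted by a character that is ``precisely'' realized by $H^1(E_9,\Q)^{\otimes 2}(2)$. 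Over $\overline{\Q}$ the occurrence of $\mathrm{std}\boxtimes\mathrm{std}\boxtimes\mathrm{std}$ inside the cube of the PEL representation is plausible, but the conjecture is a statement about $\Q$-Hodge structures: one must control the Brauer obstructions of $B_9$ and $B_9\otimes_K F_9$, the similitude and central characters, and the polarization forms, and show that the discrepancy is exactly the rank-one $\Q(i)$-CM motive $H^1(E_9,\Q)^{\otimes 2}(2)$ and not, say, a quadratic twist of it. You correctly flag this as the main obstacle, but flagging it is not resolving it; the conjecture lives or dies on this point.

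Your fallback strategy is also flawed as stated. Verifying the inclusion at the distinguished fibers $t\in\{0,1,\infty\}$ (whose endomorphism algebras appear in \Cref{cor:endos}) and then ``propagating by rigidity'' does not work, because those are CM fibers: at a CM point the Mumford--Tate group degenerates to a torus and the fiber acquires many extra Hodge classes that do not extend to nearby fibers. Exhibiting the desired summand at such a point gives no information about the generic fiber. Rigidity runs in the opposite direction: a sub-variation of Hodge structure defined at a point with \emph{generic} Mumford--Tate group spreads out over the connected Shimura base, so the correct version of this strategy would require verifying the inclusion at a single non-CM fiber --- which is exactly as hard as the representation-theoretic problem above. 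The proposal is a reasonable research plan consistent with the paper's own motivation, but it is not a proof.
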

Assuming these conjectures hold, then there is a non-zero map 
\[
H^1\hspace{-0.1em}(C_{n,t}, \Q)\otimes _{\mathbb{Q}} H^1\hspace{-0.1em}(E_n, \Q)^{\otimes 2} \rightarrow H^1\hspace{-0.1em}(X_{n,t}, \Q)^{\otimes 3}
\]
for $n=7,9$. The Hodge conjecture then predicts a  correspondence between $C_{n,t} \times E_n^2$ and $X_{n,t}^3$; it would be interesting to find an explicit geometric construction of this correspondence.
\begin{rem}
We expect an analogue of Conjectures \ref{conj:HGM237}, \ref{conj:HGM239} to hold for the Shimura curve $\calX_{11}$. In this case the field $F_{11}$ turns out to be $\Q(\zeta_{11})^+(\sqrt{-33})$, and therefore the analogous elliptic curve $E_{11}$ should have CM by $\Q(\sqrt{-33})$.

Note that the universal family of polarized abelian varieties $A_t$ over $\calX_{11}$ might not be a (relative) Jacobian and consequently $H^1(C_{n,t}, \Q)$ should be replaced with $H^1(A_t, \Q)$.
\end{rem}
\newpage
\begin{appendix}
\section{Tables}\label{sec:tables}
\begin{table}[h!] \label{table:237}
  \begin{center}
  \caption{Putative CM points on $\calX_7$}
    \begin{tabular}[]{l|l|l}
    CM field $K$ & $\disc(K)$ & Field of definition\\
    \hline \hline
    \href{https://www.lmfdb.org/NumberField/8.0.7834003547041.1}{8.0.7834003547041.1} & $7^{4} \cdot 239^{4}$ & \href{https://www.lmfdb.org/NumberField/3.3.49.1}{3.3.49.1}\\
    \href{https://www.lmfdb.org/NumberField/8.0.9529871528401.1}{8.0.9529871528401.1} & $7^{4} \cdot 251^{4}$ & \href{https://www.lmfdb.org/NumberField/3.3.49.1}{3.3.49.1}\\
    \href{https://www.lmfdb.org/NumberField/8.0.49539201251281.1}{8.0.49539201251281.1} & $7^{4} \cdot 379^{4}$ & \href{https://www.lmfdb.org/NumberField/3.3.49.1}{3.3.49.1}\\
    \href{https://www.lmfdb.org/NumberField/8.0.139546236594961.1}{8.0.139546236594961.1} & $7^{4} \cdot 491^{4}$ & \href{https://www.lmfdb.org/NumberField/3.3.49.1}{3.3.49.1}\\
    \href{https://www.lmfdb.org/NumberField/8.0.214951987660081.3}{8.0.214951987660081.3} & $7^{4} \cdot 547^{4}$ & \href{https://www.lmfdb.org/NumberField/3.3.49.1}{3.3.49.1}\\
    \href{https://www.lmfdb.org/NumberField/8.0.234444145241761.14}{8.0.234444145241761.14} & $7^{4} \cdot 13^{4} \cdot 43^{4}$ & \href{https://www.lmfdb.org/NumberField/6.2.31213.1}{6.2.31213.1}\\
    \href{https://www.lmfdb.org/NumberField/8.0.1742605033519441.13}{8.0.1742605033519441.13} & $7^{4} \cdot 13^{4} \cdot 71^{4}$ & \href{https://www.lmfdb.org/NumberField/6.2.31213.1}{6.2.31213.1}\\
    \href{https://www.lmfdb.org/NumberField/8.0.2929564229451601.1}{8.0.2929564229451601.1} & $7^{4} \cdot 1051^{4}$ & \href{https://www.lmfdb.org/NumberField/3.3.49.1}{3.3.49.1}\\
    \href{https://www.lmfdb.org/NumberField/8.0.3254452511700481.17}{8.0.3254452511700481.17} & $7^{4} \cdot 13^{4} \cdot 83^{4}$ & \href{https://www.lmfdb.org/NumberField/6.2.31213.1}{6.2.31213.1}\\
    \href{https://www.lmfdb.org/NumberField/8.0.9197356446316801.6}{8.0.9197356446316801.6} & $7^{4} \cdot 1399^{4}$ & \href{https://www.lmfdb.org/NumberField/9.3.164590951.1}{9.3.164590951.1}\\
    \href{https://www.lmfdb.org/NumberField/8.0.12515552243039041.2}{8.0.12515552243039041.2} & $7^{4} \cdot 1511^{4}$ & \href{https://www.lmfdb.org/NumberField/9.3.177767639.1}{9.3.177767639.1}\\
    \href{https://www.lmfdb.org/NumberField/8.0.15077030712450721.1}{8.0.15077030712450721.1} & $7^{4} \cdot 1583^{4}$ & \href{https://www.lmfdb.org/NumberField/9.3.186238367.1}{9.3.186238367.1}\\
    \href{https://www.lmfdb.org/NumberField/8.0.17839408610934001.14}{8.0.17839408610934001.14} & $7^{4} \cdot 13^{4} \cdot 127^{4}$ & \href{https://www.lmfdb.org/NumberField/6.2.31213.1}{6.2.31213.1}\\
    \href{https://www.lmfdb.org/NumberField/8.0.23195375592678961.2}{8.0.23195375592678961.2} & $7^{4} \cdot 41^{4} \cdot 43^{4}$ & \href{https://www.lmfdb.org/NumberField/6.2.98441.1}{6.2.98441.1}\\
    \href{https://www.lmfdb.org/NumberField/8.0.25599104327834401.1}{8.0.25599104327834401.1} & $7^{4} \cdot 13^{4} \cdot 139^{4}$ & \href{https://www.lmfdb.org/NumberField/6.2.31213.1}{6.2.31213.1}\\
    \href{https://www.lmfdb.org/NumberField/8.0.33382684286885521.1}{8.0.33382684286885521.1} & $7^{4} \cdot 1931^{4}$ & \href{https://www.lmfdb.org/NumberField/9.3.227180219.1}{9.3.227180219.1}\\
    \href{https://www.lmfdb.org/NumberField/8.0.37426880324598961.1}{8.0.37426880324598961.1} & $7^{4} \cdot 1987^{4}$ & \href{https://www.lmfdb.org/NumberField/9.3.233768563.1}{9.3.233768563.1}\\
    \href{https://www.lmfdb.org/NumberField/8.0.50638501350060001.1}{8.0.50638501350060001.1} & $7^{4} \cdot 2143^{4}$ & \href{https://www.lmfdb.org/NumberField/9.3.252121807.1}{9.3.252121807.1}\\
    \href{https://www.lmfdb.org/NumberField/8.0.73350432097456801.1}{8.0.73350432097456801.1} & $7^{4} \cdot 2351^{4}$ & \href{https://www.lmfdb.org/NumberField/9.3.276592799.1}{9.3.276592799.1}\\
    \href{https://www.lmfdb.org/NumberField/8.0.135923763363916801.1}{8.0.135923763363916801.1} & $7^{4} \cdot 13^{4} \cdot 211^{4}$ & \href{https://www.lmfdb.org/NumberField/12.2.205567038859.1}{12.2.205567038859.1}\\
    \href{https://www.lmfdb.org/NumberField/8.0.169584057270610801.1}{8.0.169584057270610801.1} & $7^{4} \cdot 13^{4} \cdot 223^{4}$ & \href{https://www.lmfdb.org/NumberField/6.2.31213.1}{6.2.31213.1}\\
    \href{https://www.lmfdb.org/NumberField/8.0.172409416411292641.2}{8.0.172409416411292641.2} & $7^{4} \cdot 41^{4} \cdot 71^{4}$ & \href{https://www.lmfdb.org/NumberField/12.2.688034764151.1}{12.2.688034764151.1}\\
    \href{https://www.lmfdb.org/NumberField/8.0.200514022193502241.1}{8.0.200514022193502241.1} & $7^{4} \cdot 3023^{4}$ & \href{https://www.lmfdb.org/NumberField/15.3.2581408254768721.1}{15.3.2581408254768721.1}\\
    \href{https://www.lmfdb.org/NumberField/8.0.291354123972299521.1}{8.0.291354123972299521.1} & $7^{4} \cdot 3319^{4}$ & \href{https://www.lmfdb.org/NumberField/9.3.390477031.1}{9.3.390477031.1}\\
    \href{https://www.lmfdb.org/NumberField/8.0.295590648845235121.1}{8.0.295590648845235121.1} & $7^{4} \cdot 3331^{4}$ & \href{https://www.lmfdb.org/NumberField/9.3.391888819.1}{9.3.391888819.1}\\
    \href{https://www.lmfdb.org/NumberField/8.0.305655472188046561.1}{8.0.305655472188046561.1} & $7^{4} \cdot 3359^{4}$ & \href{https://www.lmfdb.org/NumberField/15.3.3187134619912369.1}{15.3.3187134619912369.1}\\
    \href{https://www.lmfdb.org/NumberField/8.0.321988200130081681.1}{8.0.321988200130081681.1} & $7^{4} \cdot 41^{4} \cdot 83^{4}$ & \href{https://www.lmfdb.org/NumberField/6.2.98441.1}{6.2.98441.1}\\
    \href{https://www.lmfdb.org/NumberField/8.0.512872428109556641.1}{8.0.512872428109556641.1} & $7^{4} \cdot 3823^{4}$ & \href{https://www.lmfdb.org/NumberField/9.3.449772127.1}{9.3.449772127.1}\\
    \href{https://www.lmfdb.org/NumberField/8.0.559455831332775601.1}{8.0.559455831332775601.1} & $7^{4} \cdot 3907^{4}$ & \href{https://www.lmfdb.org/NumberField/9.3.459654643.1}{9.3.459654643.1}\\
    \href{https://www.lmfdb.org/NumberField/8.0.609142738186488961.1}{8.0.609142738186488961.1} & $7^{4} \cdot 13^{4} \cdot 307^{4}$ & \href{https://www.lmfdb.org/NumberField/12.2.299095170283.1}{12.2.299095170283.1}\\
    \href{https://www.lmfdb.org/NumberField/8.0.616502043501426481.1}{8.0.616502043501426481.1} & $7^{4} \cdot 4003^{4}$ & \href{https://www.lmfdb.org/NumberField/9.3.470948947.1}{9.3.470948947.1}\\
    \href{https://www.lmfdb.org/NumberField/8.0.718369427913099361.1}{8.0.718369427913099361.1} & $7^{4} \cdot 4159^{4}$ & \href{https://www.lmfdb.org/NumberField/9.3.489302191.1}{9.3.489302191.1}\\
    \href{https://www.lmfdb.org/NumberField/8.0.726696250583406481.3}{8.0.726696250583406481.3} & $7^{4} \cdot 43^{4} \cdot 97^{4}$ & \href{https://www.lmfdb.org/NumberField/12.2.2332363542187.1}{12.2.2332363542187.1}\\
    \href{https://www.lmfdb.org/NumberField/8.0.798933157585310881.1}{8.0.798933157585310881.1} & $7^{4} \cdot 4271^{4}$ & \href{https://www.lmfdb.org/NumberField/9.3.502478879.1}{9.3.502478879.1}\\
    \href{https://www.lmfdb.org/NumberField/8.0.807949948577050321.1}{8.0.807949948577050321.1} & $7^{4} \cdot 4283^{4}$ & \href{https://www.lmfdb.org/NumberField/9.3.503890667.1}{9.3.503890667.1}\\
    \href{https://www.lmfdb.org/NumberField/8.0.851041543370918641.1}{8.0.851041543370918641.1} & $7^{4} \cdot 4339^{4}$ & \href{https://www.lmfdb.org/NumberField/15.3.5318139197378329.1}{15.3.5318139197378329.1}\\
    \href{https://www.lmfdb.org/NumberField/8.0.1004843693178804241.1}{8.0.1004843693178804241.1} & $7^{4} \cdot 4523^{4}$ & \href{https://www.lmfdb.org/NumberField/9.3.532126427.1}{9.3.532126427.1}\\
    \href{https://www.lmfdb.org/NumberField/8.0.1135151485651484161.1}{8.0.1135151485651484161.1} & $7^{4} \cdot 4663^{4}$ & \href{https://www.lmfdb.org/NumberField/15.3.6142020067423681.1}{15.3.6142020067423681.1}\\
    \end{tabular}
  \end{center}
\end{table}

\begin{table}[h!] \label{table:239}
  \begin{center}
  \caption{Putative CM points on $\calX_9$}
    \begin{tabular}[]{l|l|l}
      CM field $K$ & $\disc(K)$ & Field of definition\\
      \hline \hline
      \href{https://www.lmfdb.org/NumberField/8.0.166726039041.1}{8.0.166726039041.1} & $3^{8} \cdot 71^{4}$ & \href{https://www.lmfdb.org/NumberField/3.3.81.1}{3.3.81.1}\\
      \href{https://www.lmfdb.org/NumberField/8.0.10289217397761.2}{8.0.10289217397761.2} & $3^{8} \cdot 199^{4}$ & \href{https://www.lmfdb.org/NumberField/3.3.81.1}{3.3.81.1}\\
      \href{https://www.lmfdb.org/NumberField/8.0.135371386676241.1}{8.0.135371386676241.1} & $3^{8} \cdot 379^{4}$ & \href{https://www.lmfdb.org/NumberField/3.3.81.1}{3.3.81.1}\\
      \href{https://www.lmfdb.org/NumberField/8.0.419992928325441.1}{8.0.419992928325441.1} & $3^{8} \cdot 503^{4}$ & \href{https://www.lmfdb.org/NumberField/9.3.267314823.1}{9.3.267314823.1}\\
      \href{https://www.lmfdb.org/NumberField/8.0.490881644910801.1}{8.0.490881644910801.1} & $3^{8} \cdot 523^{4}$ & \href{https://www.lmfdb.org/NumberField/3.3.81.1}{3.3.81.1}\\
      \href{https://www.lmfdb.org/NumberField/8.0.1956806090111601.2}{8.0.1956806090111601.2} & $3^{8} \cdot 739^{4}$ & \href{https://www.lmfdb.org/NumberField/3.3.81.1}{3.3.81.1}\\
      \href{https://www.lmfdb.org/NumberField/8.0.5832395381554641.2}{8.0.5832395381554641.2} & $3^{8} \cdot 971^{4}$ & \href{https://www.lmfdb.org/NumberField/9.3.516029211.1}{9.3.516029211.1}\\
      \href{https://www.lmfdb.org/NumberField/8.0.45077252005749681.1}{8.0.45077252005749681.1} & $3^{8} \cdot 1619^{4}$ & \href{https://www.lmfdb.org/NumberField/15.3.9139423287309561.1}{15.3.9139423287309561.1}\\
      \href{https://www.lmfdb.org/NumberField/8.0.80401691902790241.1}{8.0.80401691902790241.1} & $3^{8} \cdot 1871^{4}$ & \href{https://www.lmfdb.org/NumberField/15.3.12205980432301041.1}{15.3.12205980432301041.1}\\
      \href{https://www.lmfdb.org/NumberField/8.0.86770641842707761.1}{8.0.86770641842707761.1} & $3^{8} \cdot 1907^{4}$ & \href{https://www.lmfdb.org/NumberField/15.3.12680211005112249.1}{15.3.12680211005112249.1}\\
      \href{https://www.lmfdb.org/NumberField/8.0.147910484188525041.1}{8.0.147910484188525041.1} & $3^{8} \cdot 2179^{4}$ & \href{https://www.lmfdb.org/NumberField/9.3.1158009939.1}{9.3.1158009939.1}\\
      \href{https://www.lmfdb.org/NumberField/8.0.179487701547492321.1}{8.0.179487701547492321.1} & $3^{8} \cdot 2287^{4}$ & \href{https://www.lmfdb.org/NumberField/9.3.1215405567.1}{9.3.1215405567.1}\\
      \href{https://www.lmfdb.org/NumberField/8.0.196377137395449201.1}{8.0.196377137395449201.1} & $3^{8} \cdot 2339^{4}$ & \href{https://www.lmfdb.org/NumberField/15.3.19075922001903321.1}{15.3.19075922001903321.1}\\
      \href{https://www.lmfdb.org/NumberField/8.0.235237153286975841.1}{8.0.235237153286975841.1} & $3^{8} \cdot 2447^{4}$ & \href{https://www.lmfdb.org/NumberField/21.3.335194620603462704888703.1}{21.3.335194620603462704888703.1}\\
      \href{https://www.lmfdb.org/NumberField/8.0.257521466109501441.1}{8.0.257521466109501441.1} & $3^{8} \cdot 2503^{4}$ & \href{https://www.lmfdb.org/NumberField/9.3.1330196823.1}{9.3.1330196823.1}\\
      \href{https://www.lmfdb.org/NumberField/8.0.295692078251129121.1}{8.0.295692078251129121.1} & $3^{8} \cdot 2591^{4}$ & \href{https://www.lmfdb.org/NumberField/21.3.397921468909806398973231.1}{21.3.397921468909806398973231.1}\\
      \href{https://www.lmfdb.org/NumberField/8.0.348162166355287761.1}{8.0.348162166355287761.1} & $3^{8} \cdot 2699^{4}$ & \href{https://www.lmfdb.org/NumberField/15.3.25399833134309001.1}{15.3.25399833134309001.1}\\
      \href{https://www.lmfdb.org/NumberField/8.0.358597178472085281.1}{8.0.358597178472085281.1} & $3^{8} \cdot 2719^{4}$ & \href{https://www.lmfdb.org/NumberField/9.3.1444988079.1}{9.3.1444988079.1}\\
      \href{https://www.lmfdb.org/NumberField/8.0.398115772466433921.1}{8.0.398115772466433921.1} & $3^{8} \cdot 2791^{4}$ & \href{https://www.lmfdb.org/NumberField/9.3.1483251831.1}{9.3.1483251831.1}\\
      \href{https://www.lmfdb.org/NumberField/8.0.428624978678095761.2}{8.0.428624978678095761.2} & $3^{8} \cdot 2843^{4}$ & \href{https://www.lmfdb.org/NumberField/15.3.28182454451958249.1}{15.3.28182454451958249.1}\\
    \end{tabular}
\end{center}
\end{table}
\FloatBarrier
\end{appendix}

\bibliographystyle{alphaurl}
\bibliography{references.bib}


\end{document}